\newcommand{\kom}[1]{}
\renewcommand{\kom}[1]{{\bf [#1]}}
\def\vint_#1{\mathchoice%
          {\mathop{\kern 0.2em\vrule width 0.6em height 0.69678ex depth -0.58065ex
                  \kern -0.8em \intop}\nolimits_{\kern -0.4em#1}}%
          {\mathop{\kern 0.1em\vrule width 0.5em height 0.69678ex depth -0.60387ex
                  \kern -0.6em \intop}\nolimits_{#1}}%
          {\mathop{\kern 0.1em\vrule width 0.5em height 0.69678ex
              depth -0.60387ex
                  \kern -0.6em \intop}\nolimits_{#1}}%
          {\mathop{\kern 0.1em\vrule width 0.5em height 0.69678ex depth -0.60387ex
                  \kern -0.6em \intop}\nolimits_{#1}}}
                  \newcommand{\aveint}[2]{\mathchoice%
          {\mathop{\kern 0.2em\vrule width 0.6em height 0.69678ex depth -0.58065ex
                  \kern -0.8em \intop}\nolimits_{\kern -0.45em#1}^{#2}}%
          {\mathop{\kern 0.1em\vrule width 0.5em height 0.69678ex depth -0.60387ex
                  \kern -0.6em \intop}\nolimits_{#1}^{#2}}%
          {\mathop{\kern 0.1em\vrule width 0.5em height 0.69678ex depth -0.60387ex
                  \kern -0.6em \intop}\nolimits_{#1}^{#2}}%
          {\mathop{\kern 0.1em\vrule width 0.5em height 0.69678ex depth -0.60387ex
                  \kern -0.6em \intop}\nolimits_{#1}^{#2}}}
\def\1{\raisebox{2pt}{\rm{$\chi$}}}
\newcommand{\dist}{\operatorname{dist}}
\newcommand{\kint}{\vint}
\newcommand{\eps}{\varepsilon}
\theoremstyle{plain}
\newtheorem{definition}{Definition}[section]
\newtheorem{theorem}[definition]{Theorem}
\newtheorem*{theorem*}{Main Theorem 1}
\newtheorem*{theorem**}{Main Theorem 2}
\newtheorem{lemma}[definition]{Lemma}
\newtheorem{remark}[definition]{Remark}
\theoremstyle{definition}
\theoremstyle{remark}
\numberwithin{equation}{section}
\begin{document}

\title[]{Tug-of-war games related to $p$-Laplace type equations with zeroth order terms}

\author{Jeongmin Han}
\address{Department of Mathematics and Creative Research Institute, Soongsil University, 
06978 Seoul, Republic of Korea}
\email{jeongmin.han@ssu.ac.kr}

%\author[Parviainen]{Mikko Parviainen}
%\address{Department of Mathematics and Statistics, University of Jyv\"{a}skyl\"{a}, 
%P.O. Box 35, FI-40014 Jyv\"{a}skyl\"{a}, Finland}
%\email{mikko.j.parviainen@jyu.fi}

\keywords{Dynamic programming principle, stochastic game, viscosity solution, $p$-Laplacian}
\subjclass[2020]{91A05, 91A15, 35D40, 35B65}

\begin{abstract}
In this paper, we investigate a class of tug-of-war games that incorporate a constant payoff discount rate at each turn. 
The associated model problems are $p$-Laplace type partial differential equations with zeroth-order terms. 
We establish existence, uniqueness, and regularity results for the corresponding game value functions. 
Furthermore, we explore properties of the solutions to the model PDEs, informed by the analysis of the underlying games.
\end{abstract}

\maketitle
\tableofcontents

\section{Introduction}

This paper focuses on a class of tug-of-war games that incorporate a fixed discount rate at each turn.
Specifically, we study the game defined on a bounded domain $\Omega\subset\mathbb{R}^n$ associated with the following dynamic programming principle (DPP)
\begin{align}\label{dpp_damp}
u_{\eps}(x)=(1-\gamma \eps^2)\bigg\{ \frac{\alpha}{2}\bigg(\sup_{B_\eps(x)}u_{\eps}+\inf_{B_\eps(x)}u_{\eps} \bigg)
+\beta\kint_{B_\eps(x)}u_{\eps}(y)dy \bigg\}
\end{align}
satisfying $u_{\eps}\equiv F$ in the outer $\eps$-boundary strip $\Gamma_{\eps}$
for $\gamma \ge 0$ and $\alpha,\beta\in(0,1)$ with $\alpha+\beta=1$.
In addition, we also consider the corresponding PDE problem
\begin{align}\label{eqmiin}
\left\{ \begin{array}{ll}
\Delta_{p}^{N} u-(p+n)\gamma u=0 & \textrm{in $ \Omega$,}\\
 u = F & \textrm{on $ \partial \Omega$}\\
\end{array} \right.
\end{align}
with $\alpha=\frac{p-2}{p+n}$ and $\beta=\frac{n+2}{p+n}$  
based on the discussion about the DPP \eqref{dpp_damp}.
We state our main results here.
\begin{theorem*} Let $F$ be a function in $L^{\infty}(\Gamma_{\eps})$.
For $ 0< \alpha < 1 $ and $\eps>0$ with $\gamma\eps^2<\frac{1}{2}$, there exists a unique function $u_{\eps}$ satisfying \eqref{dpp_damp}. 
Moreover, $u_\eps$ satisfies
$$ |u_{\eps} (x) - u_{\eps} (z) | \le C ||F||_{L^{\infty}(\Gamma_{\eps})} \frac{ |x-z| + \eps}{r},$$ where $x, z \in B_{r}(y)$ for some $y\in \Omega$ with $B_r  (y)\subset\subset \Omega$ and $C>0$ only depends on $ \alpha, \gamma$ and $n$.
\end{theorem*}
\begin{theorem**} Let $F$ be a function in $L^{\infty}(\partial \Omega)$.
There exists a unique solution to \eqref{eqmiin} for $ 2<p< \infty$.
Moreover, $u$ satisfies
$$ |u (x) - u(z) | \le C ||F||_{L^{\infty}(\partial \Omega)} \frac{|x-z|}{r} ,$$ where $x, z \in B_{r}(y)$ for some $y\in \Omega$ with $B_r  (y)\subset\subset \Omega$ and $C>0$ depends only on $ \alpha, \gamma$ and $n$.
\end{theorem**}

Over the past decade, a variety of significant results have been established in the study of tug-of-war games. 
Much of the existing literature has focused on cases involving only terminal payoffs or running payoffs. 
However, it is also natural and mathematically interesting to consider games that incorporate a discount (or surcharge) at each turn. 
In this paper, we extend the discussion to include such games with discounting mechanisms.
The discount factor in the game setting corresponds to a zeroth-order term in the associated PDE problem.
In PDE theory, for example, it is well-known that the second-order equation
$$ a_{ij}D_{ij} u - cu =0, $$
where $a_{ij}$ satisfies $ \lambda |\xi|^2 \le a_{ij}\xi_i \xi_j \le \Lambda |\xi|^2 $ for some $0<\lambda\le \Lambda$ any $\xi \in \mathbb{R}^n$,
has the existence and uniqueness when $c\ge0$. On the other hand, there are also a number of regularity results for this equation.
A general theory for this equation can be found, for example, in \cite{MR737190,MR2777537} and especially in \cite{MR1118699,MR1351007} for fully nonlinear PDEs. For equations including lower order terms such as \eqref{eqmiin}, we refer the reader to \cite{MR1376656,MR1606359}.
We investigate a parallel theory in the context of the tug-of-war game value functions.
Specifically, the condition $\gamma \ge0$ in the DPP \eqref{dpp_damp} plays a crucial role in establishing the existence, uniqueness, and regularity of the corresponding value functions.

As a simple example, we can consider a value function of the tug-of-war with constant payoff on the boundary strip.
When $\gamma=0$, i.e., there is no discount, one can guess without difficulty that the value function of the game (or the function satisfying \eqref{dpp_damp}) would also be constant
in the domain.
However, if $\gamma >0$, the behavior of the value function would be quite different.
In that case, our intuition suggests that if the token is near the boundary, the game will be finished soon and hence the discounted amount would be relatively small.
In contrast, if the token is far from the boundary, it requires a lot of turns to finish the game and this will reduce the payoff significantly.
The following pictures show the graphs of the solutions of the problems $u_1''=0$ and $u_2''-u_2=0$ with the same boundary data $u_i(-1)=u_i(1)=1$ for $i=1,2$, which correspond to the problem \eqref{eqmiin} in $\mathbb{R}$. 
By the convergence result, Theorem \ref{conv}, we can expect that the graphs of the value functions associated with the above problems would look similar to them for sufficiently small $\eps$.
\begin{center}
\begin{figure}
    \centering
\begin{tikzpicture}[scale=2,>=stealth,font=\tiny]
  \draw[->] (-1.2,0) -- (1.2,0) node[right] {$x$}; %%%%%%%%%%%%%%%%%%%%%%%%% x-axis
  \draw[->] (0,-.4) -- (0,1.5) node[left] {$y$}; %%%%%%%%%%%%%%%%%%%%%%%%% y-axis
  \filldraw[magenta] (1,1) circle (1pt);
  \filldraw[magenta] (-1,1) circle (1pt);
  \node at (-.1,-.1) {$0$}; %%%%%%%%%%%%%%%%%%%%%%%%% the origin
  \draw[red] plot [domain=-1:1,samples=200] (\x,1); %%%%%%%%%%%%%%%%%%%%%%%%% -1 < x < 1
    \draw[blue] plot [domain=-1:1,samples=200] (\x, {(e^(\x)+e^(-\x))/(e+1/e)});
  \foreach \x in {-1, 1}
    \draw (\x,0) node[below] {$\x$};
  \node[above left] at (0,.8) {$1$};
  \node[above, red] at (1.1,1.0) {$u_1$};
    \node[above, blue] at (1.1,0.8) {$u_2$};
\end{tikzpicture}
    \caption{The solutions of $u_1''=0$ (red) and $u_2''-u_2=0$ (blue).}
    \label{fig:enter-label}
\end{figure}
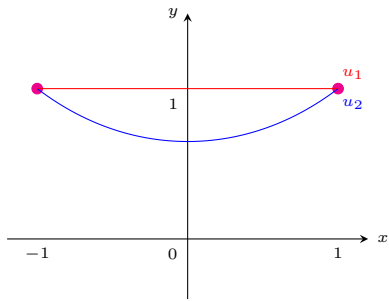
\end{center}

As we mentioned earlier, we mainly investigated \eqref{dpp_damp} and \eqref{eqmiin} throughout this paper.
To construct a stochastic game associated with \eqref{eqmiin}, we need to look at the local behavior of solutions to the problem.
Heuristic intuitions can be obtained by using a simple observation by using Taylor expansion, and we build a game based on such insights.
It is important to verify the existence and uniqueness of the value function and its relevance to the DPP \eqref{dpp_damp}.
To this end, we considered the methods in \cite{MR3161602}, which covered those issues for the case of no discounts.
On the other hand, in this paper, we presented several regularity results for the function value satisfying \eqref{dpp_damp}. We used the cancellation method to derive interior estimates for the game value, which has been used in various preceding results, for example, \cite{MR3494400,MR4125101}.
For the boundary estimate, the construction of an auxiliary game is a key step to obtain the desired result (cf. \cite{MR3011990,MR4299842}).
We also employ the notion of viscosity solution and an Arzel\`{a}-Ascoli criterion introduced in \cite{MR3011990} for the convergence of the value function.

Since the paper \cite{MR2449057} in the mid-2000s introduced tug-of-war games as an interpretation of the infinity Laplacian, there has been remarkable progress in theory on stochastic games.
Thus far, various types of tug-of-war games and the associated DPPs have been investigated, for example, \cite{MR2451291,MR2887928,MR4257616}.
On the other hand, this topic is also closely related to the $p$-Laplace type equations, and mean value characterizations of their solutions. 
In this context, one can find several papers such as \cite{MR2566554,MR2684311,MR3011990,MR3177660,MR3846232}.
In addition, it is worthwhile to mention \cite{MR3893728,MR3977220,MR4684373,MR4684385,arroyo2024krylov}, which deal with several interesting problems related to stochastic games.
Recently, there have been attempts to advance the theory concerning more general forms of dynamic programming equations in connection with this topic, such as \cite{MR4496902,MR4565418}.
We also refer the reader to \cite{MR601776,MR1199811} for a general theory of controlled processes including games.

{\bf Acknowledgments} This research was supported by the Basic Science Research Program through the National Research Foundation of Korea(NRF) funded by the Ministry of Education (RS-2021-NR060140) and by Global - Learning \& Academic research institution for Master's\textperiodcentered PhD students, and Postdocs(LAMP) Program of the National Research Foundation of Korea(NRF) grant funded by the Ministry of Education(No. RS-2025-25441317).

\section{Preliminaries}

\subsection{Heuristics}
We give a heuristic description of the relation between the DPP \eqref{dpp_damp} and the model problem \eqref{eqmiin} here.
This is closely linked to mean value characterizations of solutions to \eqref{eqmiin}.
For discussions on the mean value characterizations for the $p$-Laplace equation and its related problems, we refer to the previous results, such as \cite{MR2566554,MR2684311,MR3177660,MR4399821,teso2026game}.

Let $u \in C^{2}(\overline{\Omega})$ be a function solving the problem \eqref{dpp_damp}.
Assume that $x\in \Omega$ with $B_{\eps}(x)\subset\subset \Omega$ with small $\eps>0$. 
By using Taylor expansion, we first see that
\begin{align*}
\kint_{B_{\eps}(x)}u(y)dy= u(x)+\frac{\Delta u(x)}{2(n+2)}\eps^2 +o(\eps^2).
\end{align*}
On the other hand, if $D u\neq 0$, we can consider the following rough approximation:
\begin{align*}
\frac{1}{2}\bigg(\sup_{B_\eps(x)}u+\inf_{B_\eps(x)}u \bigg)
&\approx \frac{1}{2} \bigg\{  u\bigg(x+\eps\frac{Du(x)}{|Du(x)|}\bigg) +u\bigg(x-\eps\frac{Du(x)}{|Du(x)|}\bigg)  \bigg\}
\\& = u(x)+\frac{\langle D^2u(x)Du(x),Du(x) \rangle}{2|Du(x)|^2}\eps^2+o(\eps^2)
\\& = u(x)+\frac{\Delta_{\infty}^{N}u(x)}{2}\eps^2+o(\eps^2).
\end{align*}
Thus, for $\alpha=\frac{p-2}{p+n}$ and $\beta=\frac{n+2}{p+n}$, we see that
\begin{align*}
&(1-\gamma \eps^2)\bigg\{ \frac{\alpha}{2}\bigg(\sup_{B_\eps(x)}u+\inf_{B_\eps(x)}u \bigg)
+\beta\kint_{B_\eps(x)}u(y)dy \bigg\}
\\ & \approx (1-\gamma \eps^2)\bigg(u(x)+\frac{\Delta_{p}^{N} u(x)}{2(p+n)}\eps^2 +o(\eps^2) \bigg)
\\ & = u(x) + \frac{1}{2}\bigg( \frac{\Delta_{p}^{N} u(x)}{p+n}-\gamma u \bigg)\eps^2+o(\eps^2).
\end{align*}
Since $u$ solves
$$ \Delta_{p}^{N} u-(p+n)\gamma u=0$$
in $\Omega$, we derive
\begin{align*} 
u(x)\approx (1-\gamma \eps^2)\bigg\{ \frac{\alpha}{2}\bigg(\sup_{B_\eps(x)}u+\inf_{B_\eps(x)}u \bigg)
+\beta\kint_{B_\eps(x)}u(y)dy\bigg\}+o(\eps^2).
\end{align*}

From this observation, one can expect that $u_\eps$ satisfying \eqref{dpp_damp} has similar behaviors to $u$. 
Indeed, we will discuss the convergence issue of $u_{\eps}$ in Section \ref{sec:con}.
We also give a regularity result of $u$, Theorem \ref{pdereg}, by employing the estimates in Section \ref{sec:reg}.

\subsection{Tug-of-war game with discounts on the payoff}
\label{ssec:tow}
Now we describe a stochastic game associated with \eqref{dpp_damp}.
We construct a two-player zero-sum stochastic game.
Let $\Omega \subset \mathbb{R}^{n} $ be a bounded domain, 
$$ \Gamma_{\eps} := \{x\in \mathbb{R}^n\backslash \Omega : \dist(x,\partial \Omega)\le \eps \}$$
and $\Omega_{\eps}:=\Omega \cup \Gamma_{\eps} $. 
We begin with the game at a starting point $x_0\in\Omega$.
Each round, players decide their strategies on how to move the token; $S_{\textrm{I}}$ for Player I and $S_{\textrm{II}}$ for Player II, respectively.
Then with a probability $\alpha$, there is a fair coin toss and the winner gets the right to move the token in $B_{\eps}(x_0)$.
On the other hand, with a probability $\beta$, the token is moved randomly in $B_{\eps}(x_0)$ according to the uniform distribution.
Let the new position of the token be $x_1$. If $x_1\not\in \Omega$, Player II pays Player I the payoff $(1-\gamma\eps^2)F(x_1)$.
Otherwise, the players repeat the above procedures until the token goes outside $\Omega$
and then we can define $x_2, x_3, \dots$, and so on.
After the game is over, Player II pays Player I the payoff $(1-\gamma\eps^2)^{\tau}F(x_\tau)$, 
where $x_{\tau}\in\Gamma_{\eps}$ is the end point of the game.
We refer to \cite{MR3623556,MR4125101}, which considered the case $\gamma=0$, that is, there is no discount at each turn.

We can write this game setting more rigorously.
We first consider the history of the game defined to be a vector of the first $k+1$ game states and $k$ coin tosses.
It can be written as
$$ \big( x_0,(c_1,x_1),\dots,(c_k,x_k) \big),$$
where $c_i\in \{ \textrm{I}, \textrm{II}, 0 \}$ denotes the result of each round; $\textrm{I}$ if Player I wins, $\textrm{II}$ if Player II wins and $0$ if a random walk occurs. 
We remark that the history of the game is associated with the filtration $\{\mathcal{F}_k \}_{k=0}$, where
$ \mathcal{F}_0 := \sigma (x_0)$ and
$$ \mathcal{F}_k := \sigma \big( x_0,(c_1,x_1),\dots,(c_k,x_k) \big) \qquad \textrm{for} \quad k\ge1.$$
Now we focus on a strategy, which is a collection of Borel-measurable functions giving the next position of the token.
For $j \in \{ \textrm{I}, \textrm{II}\}$, we set 
$$S_{j}^{k}\big( x_0,(c_1,x_1),\dots,(c_k,x_k) \big)=s_{j}^{k} \in B_{1}  $$
for each $k=0,1,\dots$.
%and $S_{j}=\{ S_{j}^{k} \}_{k=0}$.
We also define sequences of random variables $ \{\xi_{k} \}_{k=0}\subset [0,1]$ and $ \{w_{k} \}_{k=0}\subset B_1$, where each $\xi_k$ and $w_k$ are randomly selected according to the uniform distribution in $[0,1]$ and $B_1$, respectively. 
Thus, we set a sequence of vector-valued random variables $\{ X_{k}^{ x_{0}}\}_{k=0}$ with
$X_{0}^{ x_{0}} \equiv x_{0}$ and 
\begin{equation}\label{gadef}X_{k}^{x_{0}} =
\left\{ \begin{array}{llll}
X_{k-1}^{x_{0}}+\eps s_{\textrm{I}}^{k-1}& \textrm{if $0\le \xi_{k-1} < \frac{\alpha}{2}$,}\\
X_{k-1}^{x_{0}}+\eps s_{\textrm{II}}^{k-1} &  \textrm{if $ \frac{\alpha}{2} \le \xi_{k-1} < \alpha$,}  \\
X_{k-1}^{x_{0}}+\eps  w_k & \textrm{if $ \alpha< \xi_{k-1} \le 1$}.\\
\end{array} \right.
\end{equation}
In this case, the stopping time $\tau$ is defined by
$$\tau= \min \{ k \ge 0 : X_{k}\in \Gamma_{\eps} \}. $$

Now the value functions for Player I and II are given by 
$$u_{\eps}^{\textrm{I}}(x_0)= \sup_{S_{\textrm{I}}}\inf_{S_{\textrm{II}}}\mathbb{E}_{S_{\textrm{I}},S_{\textrm{II}}}^{x_0}[(1-\gamma \eps^2)^{\tau} F(X_{\tau})]$$
and
$$u_{\eps}^{\textrm{II}}(x_0)= \inf_{S_{\textrm{II}}}\sup_{S_{\textrm{I}}}\mathbb{E}_{S_{\textrm{I}},S_{\textrm{II}}}^{x_0}[(1-\gamma \eps^2)^{\tau} F(X_{\tau})],$$
respectively.
We directly see that $ u_{\eps}^{\textrm{I}}\le u_{\eps}^{\textrm{II}}$ by the definition.
If those two functions coincide, we can define the game value function $u_{\eps}:=u_{\eps}^{\textrm{I}}=u_{\eps}^{\textrm{II}}$.

We note a simple observation on the value function here.
\begin{remark}\label{maxcom}
Since $\gamma>0$, we directly see that $u_{\eps}$ satisfies the following maximum principle 
$$ ||u_{\eps}||_{L^{\infty}(\Omega)}\le ||F||_{L^{\infty}(\Gamma_{\eps})}$$
by the definition.
Suppose that $u_{\eps}$ and $v_{\eps}$ satisfy the DPP \eqref{dpp_damp} with the payoff functions $F_1$ and $F_2$. respectively. Then if $F_1\le F_2 $ in $\Gamma_{\eps}$, we also have
$$ u_{\eps}\le v_{\eps} \qquad \textrm{in} \ \ \Omega_{\eps}. $$
\end{remark}

\section{The existence and uniqueness of the value function}
We consider the existence and uniqueness of our value function for the game under the setting in the previous section.
This issue was mainly dealt with for the case $\gamma=0$ in \cite{MR3161602}.
We first consider the existence and uniqueness of the function satisfying \eqref{dpp_damp},
and then we also clarify that the function indeed coincides with the game value of our tug-of-war game.
We remark that the condition $\gamma \ge 0$ is necessary to verify existence and uniqueness.
\subsection{The existence and uniqueness of the solution to  \eqref{dpp_damp}}
As we mentioned above, we investigate the existence and uniqueness of the function satisfying our DPP.

We first discuss the existence. To this end,
let $\mathcal{F}_{\eps}$ be the set of all Borel measurable and bounded functions defined on $\Omega_{\eps}$.
We define an operator $\mathcal{T}_{\eps}$ by
\begin{align}\label{deft} \mathcal{T}_{\eps}u(x) =
\left\{ \begin{array}{ll}
(1-\gamma \eps^2)\bigg\{ \frac{\alpha}{2}\big(\sup_{B_\eps(x)}u +\inf_{B_\eps(x)}u  \big)
+\beta\kint_{B_\eps(x)}u (y)dy \bigg\}& \textrm{if $ x\in \Omega$,}\\
 u(x)& \textrm{if $ x\in \Gamma_{\eps}$.}\\
\end{array} \right.
\end{align}
Then we directly see that $\mathcal{T}_{\eps}u\in \mathcal{F}_{\eps}$  for every $u\in \mathcal{F}_{\eps}$ since
$\mathcal{T}_{\eps}u$ is bounded from Remark \ref{maxcom} and $\sup_{B_{\eps}(x)}v$, $\inf_{B_{\eps}(x)}v$
are Borel in $\Omega$ for any bounded Borel function $v$ in $\Omega_{\eps}$ (see (2.3) in \cite{MR3161602}).

We use an iteration to investigate the existence of a function satisfying \eqref{dpp_damp}.
Let 
\begin{align*} u_0(x) =
 \left\{ \begin{array}{ll}
-||F||_{L^{\infty}(\Gamma_{\eps})} & \textrm{if $ x\in \Omega$,}\\
 F(x)& \textrm{if $ x\in \Gamma_{\eps}$,}\\
\end{array} \right.
\end{align*} and define $u_{k}=\mathcal{T}_{\eps}u_{k-1}$
for $k=1,2,\dots$.
We first need to show that $u_k$ converges pointwise.
For each $k\ge1$ and $x\in \Omega$, we observe that
$u_{k}(x)\le u_{k+1}(x)$ since $u_0 \le \mathcal{T}_{\eps}u_0$ and $\mathcal{T}_{\eps}$ preserves the monotonicity, i.e.,
$$ u\le v \qquad \textrm{implies} \qquad \mathcal{T}_{\eps}u\le \mathcal{T}_{\eps}v.$$
We also see that $|u_k|\le ||F||_{L^{\infty}(\Gamma_{\eps})}$ without difficulty and hence,
it follows that there exists a function $\underline{u}$ defined on $\Omega_{\eps}$ such that for each $x\in\Omega$,
$$\underline{u}(x)=\lim_{k\to \infty}u_k (x) $$
by the monotone convergence theorem. 
The uniform convergence of $\underline{u}$ can also be shown by using a similar argument in the proof of \cite[Theorem 2.1]{MR3161602}
and it follows that $\underline{u}=\mathcal{T}_{\eps}\underline{u}$.

Next, we consider the uniqueness. To do this, it is enough to show the following lemma. % (cf. \cite[Theorem 2.2]{MR3161602}).
\begin{lemma}
Suppose that $u_1$ and $u_2$ satisfy the DPP \eqref{dpp_damp} with boundary data $F_1$ and $F_2$ in $\Gamma_{\eps}$, respectively.
Then, we have
$$\sup_{\Omega}|u_1-u_2|\le \sup_{\Gamma_{\eps}}|F_1-F_2|. $$
\end{lemma}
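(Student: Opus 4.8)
The plan is to exploit the sub- and super-additivity of the extremal operators together with the strict discount factor $1-\gamma\eps^2<1$. Write $w:=u_1-u_2$ on $\Omega_{\eps}$ and set $M:=\sup_{\Gamma_{\eps}}|F_1-F_2|$. Because the DPP \eqref{dpp_damp} is nonlinear through the $\sup$ and $\inf$ terms, I cannot subtract the two equations directly; instead I would first record the elementary inequalities
$$\sup_{B_\eps(x)} u_1 - \sup_{B_\eps(x)} u_2 \le \sup_{B_\eps(x)} w, \qquad \inf_{B_\eps(x)} u_1 - \inf_{B_\eps(x)} u_2 \le \sup_{B_\eps(x)} w,$$
each obtained by choosing an almost-optimal competitor for the relevant extremum (for the $\sup$ terms pick $y$ nearly maximizing $u_1$, for the $\inf$ terms pick $y$ nearly minimizing $u_2$, and let the error tend to $0$), together with the trivial bound $\kint_{B_{\eps}(x)} w(y)\,dy \le \sup_{B_\eps(x)} w$.

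Substituting these three estimates into the difference of the two instances of \eqref{dpp_damp} and using $\alpha+\beta=1$ gives, for every $x\in\Omega$,
$$w(x) \le (1-\gamma\eps^2)\left(\tfrac{\alpha}{2}\cdot 2\sup_{B_\eps(x)} w + \beta \sup_{B_\eps(x)} w\right) = (1-\gamma\eps^2)\sup_{B_\eps(x)} w.$$
Since $B_\eps(x)\subset\Omega_\eps$ for $x\in\Omega$, the right-hand side is at most $(1-\gamma\eps^2)L$, where $L:=\sup_{\Omega_\eps} w$ is finite by the maximum principle of Remark \ref{maxcom}. Taking the supremum over $x\in\Omega$ therefore yields $\sup_\Omega w \le (1-\gamma\eps^2) L$. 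Note that this step never requires the supremum of $w$ to be attained, so no continuity of $u_1,u_2$ is used, only their boundedness.

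Now $L=\max\{\sup_\Omega w,\ \sup_{\Gamma_\eps} w\}$, and I would split into two cases. If the maximum is realized on $\Gamma_\eps$, then $L=\sup_{\Gamma_\eps}(F_1-F_2)\le M$ and there is nothing more to do. Otherwise $L=\sup_\Omega w$, and the bound above reads $L\le(1-\gamma\eps^2)L$, that is $\gamma\eps^2 L\le 0$; as $\gamma\eps^2>0$ this forces $L\le 0\le M$. In either case $\sup_\Omega w \le L \le M$. Interchanging the roles of $u_1$ and $u_2$ bounds $\sup_\Omega(u_2-u_1)$ by $M$ as well, and combining the two gives $\sup_\Omega|u_1-u_2|\le M$, which is the assertion.

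The only genuinely nonlinear point is the first step: without the sublinearity inequalities for $\sup$ and $\inf$ one cannot pass from the difference of the two equations to a single scalar inequality for $w$, so I expect that to be the main obstacle. The other delicate point is that the contraction $L\le(1-\gamma\eps^2)L$ is informative precisely because the discount factor is strictly less than one; this is where the hypothesis $\gamma\ge0$ (in its strict form $\gamma>0$, as used in Remark \ref{maxcom}) enters, matching the earlier remark that $\gamma\ge0$ is needed for uniqueness. In the degenerate case $\gamma=0$ the inequality becomes vacuous and one must fall back on the stopping-time argument of \cite{MR3161602} instead.
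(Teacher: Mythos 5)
Your proof is correct and rests on the same mechanism as the paper's: the sublinearity of $\sup$ and $\inf$ over $B_\eps(x)$ yields the contraction $\sup_{\Omega}(u_1-u_2)\le(1-\gamma\eps^2)\sup_{\Omega_{\eps}}(u_1-u_2)$, and the strict discount $\gamma>0$ then forces the interior supremum to be nonpositive unless it is controlled by the boundary data. Your direct supremum argument even sidesteps the paper's extraction of a near-maximizing sequence and its appeal to the absolute continuity of the integral, which your version shows to be unnecessary.
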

\begin{proof}
Without loss of generality, we can assume that $F_1\le F_2$ in $\Gamma_{\eps}$.
Then, we only need to show that 
\begin{align*}
M:=\sup_{\Omega}(u_2-u_1)\le \sup_{\Gamma_{\eps}}(F_2-F_1)=:m.
\end{align*} 
Suppose not. 
In that case, we have $M>m$ and this implies for any $x\in \Omega$
\begin{align}\label{ine1}\begin{split}
u_2(x)-u_1(x)& \le (1-\gamma\eps^2)\bigg\{\alpha \sup_{B_{\eps}(x)}(u_2-u_1)+\beta \kint_{B_{\eps}(x)} (u_2-u_1)(y)dy\bigg\} \\ &
\le (1-\gamma\eps^2)\bigg\{\alpha M+\beta \kint_{B_{\eps}(x)} (u_2-u_1)(y)dy\bigg\}.
\end{split}
\end{align}

Now we set
$$S:=\{x\in\Omega_{\eps}: u_2(x)-u_1(x)=M \}.$$
Since we assumed that $m<M$, we have $S\subset \Omega$.
We are going to prove that $S=\varnothing$.
By the boundedness of $\Omega$, we observe that $\overline{\Omega}$ is compact.
Thus, if $S\neq\varnothing$, we can take a sequence $\{x_k\}\subset S\cap\Omega$ such that
\begin{align*}
\lim_{k\to\infty} (u_2-u_1)(x_k)=M \qquad \textrm{and} \qquad  \lim_{k\to\infty} x_k=x_0\in \overline{\Omega}.
\end{align*}
Then we can deduce
$$ \kint_{B_{\eps}(x_0)}(u_2-u_1)(y)dy=\lim_{k\to\infty}\kint_{B_{\eps}(x_k)}(u_2-u_1)(y)dy$$
by the absolute continuity of the Lebesgue integral.
On the other hand, it also follows that
\begin{align*}
M&=\lim_{k\to\infty} (u_2-u_1)(x_k)\\ & \le
(1-\gamma\eps^2)\bigg\{\alpha M+\beta \lim_{k\to\infty}\kint_{B_{\eps}(x_k)}(u_2-u_1)(y)dy \bigg\}
\\&= (1-\gamma\eps^2)\bigg\{\alpha M+\beta \kint_{B_{\eps}(x_0)}(u_2-u_1)(y)dy \bigg\}
\\ & \le (1-\gamma\eps^2)M =M-\gamma\eps^2 M
\end{align*}
by \eqref{ine1}.
However, we have already assumed that $\gamma>0$ and $$M>m=\sup_{\Gamma_{\eps}}(F_2-F_1)\ge0,$$
and this is a contradiction.
Therefore, we have $M\le m$ and we can finish the proof.
\end{proof}
We can directly observe the uniqueness of the solution to \eqref{dpp_damp} by applying the above lemma with $F_1=F_2$.

\subsection{The existence of the game value}
We have observed the existence and uniqueness of the function satisfying \eqref{dpp_damp} in the previous subsection.
It remains to verify that the function indeed coincides with the value function of the game introduced in Section \ref{ssec:tow}.
We can show this by constructing a sub(or super)martingale (see also \cite[Theorem 3.2]{MR3161602}).

\begin{theorem}Let $u_{\eps}$ be the function satisfying the DPP \eqref{dpp_damp} with boundary data $F$ and
$u_{\eps}^{\textrm{I}}$ and $u_{\eps}^{\textrm{II}}$ are the corresponding game values for Player I and Player II, respectively.
Then, we have $u_{\eps}=u_{\eps}^{\textrm{I}}=u_{\eps}^{\textrm{II}}$ in $\Omega_{\eps}$.
\end{theorem}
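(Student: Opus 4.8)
The plan is to establish the two inequalities $u_{\eps}^{II} \le u_{\eps} \le u_{\eps}^{I}$ on $\Omega_{\eps}$; since $u_{\eps}^{I}\le u_{\eps}^{II}$ holds by definition, these force $u_{\eps}=u_{\eps}^{I}=u_{\eps}^{II}$. On $\Gamma_{\eps}$ all three functions coincide with $F$, so only the interior matters. The whole argument rests on converting the DPP \eqref{dpp_damp} into a (sub/super)martingale statement for the discounted process $(1-\gamma\eps^2)^k u_{\eps}(X_k)$ and then applying optional stopping.

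First I would prove $u_{\eps}\le u_{\eps}^{I}$. Fix $\delta>0$ and let Player I play a strategy $S_I$ that moves almost optimally toward the supremum at each step: choose $S_I^{k-1}$ so that $u_{\eps}(X_{k-1}+\eps S_I^{k-1})\ge \sup_{B_{\eps}(X_{k-1})}u_{\eps}-\delta 2^{-k}$, which exists by a standard measurable selection using that $\sup_{B_{\eps}(\cdot)}u_{\eps}$ is Borel (cf. (2.4) in \cite{MR3161602}). Let Player II play any strategy $S_{II}$, so trivially $u_{\eps}(X_{k-1}+\eps S_{II}^{k-1})\ge \inf_{B_{\eps}(X_{k-1})}u_{\eps}$. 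Conditioning on $\mathcal F_{k-1}$ and splitting into the three cases of \eqref{gadef} with probabilities $\tfrac{\alpha}{2},\tfrac{\alpha}{2},\beta$, one gets $\mathbb{E}[u_{\eps}(X_k)\mid\mathcal F_{k-1}]\ge \tfrac{\alpha}{2}\big(\sup_{B_{\eps}(X_{k-1})}u_{\eps}+\inf_{B_{\eps}(X_{k-1})}u_{\eps}\big)+\beta\kint_{B_{\eps}(X_{k-1})}u_{\eps}-\tfrac{\alpha}{2}\delta 2^{-k}$, and \eqref{dpp_damp} identifies the bracketed quantity with $u_{\eps}(X_{k-1})/(1-\gamma\eps^2)$. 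Multiplying by $(1-\gamma\eps^2)^k$ and setting $a_k:=(1-\gamma\eps^2)^k\tfrac{\alpha}{2}\delta 2^{-k}$ gives $\mathbb{E}[(1-\gamma\eps^2)^k u_{\eps}(X_k)\mid\mathcal F_{k-1}]\ge (1-\gamma\eps^2)^{k-1}u_{\eps}(X_{k-1})-a_k$, so the corrected process $M_k:=(1-\gamma\eps^2)^k u_{\eps}(X_k)+\sum_{j=1}^k a_j$ is a submartingale under $S_I$ against any $S_{II}$.

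Next I would invoke optional stopping. The process $M_k$ is uniformly bounded, since the maximum principle of Remark \ref{maxcom} gives $|(1-\gamma\eps^2)^k u_{\eps}(X_k)|\le \|F\|_{L^{\infty}(\Gamma_{\eps})}$ and $\sum_j a_j\le \tfrac{\alpha}{2}\delta$. Because the random-walk case occurs with probability $\beta>0$ at every step, the token leaves the bounded domain $\Omega$ in finite time almost surely, so $\tau<\infty$ a.s. and $u_{\eps}(X_\tau)=F(X_\tau)$. Applying optional stopping to the bounded submartingale $M_{k\wedge\tau}$ and letting $k\to\infty$ yields $u_{\eps}(x_0)=M_0\le \mathbb{E}[M_\tau]\le \mathbb{E}_{S_{I},S_{II}}^{x_0}[(1-\gamma\eps^2)^\tau F(X_\tau)]+\tfrac{\alpha}{2}\delta$. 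Taking the infimum over $S_{II}$, using the definition of $u_{\eps}^{I}$, and letting $\delta\to 0$ gives $u_{\eps}\le u_{\eps}^{I}$.

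The inequality $u_{\eps}^{II}\le u_{\eps}$ is entirely symmetric: let Player II move almost optimally toward the infimum and Player I play arbitrarily; the same computation produces the supermartingale $\widetilde M_k:=(1-\gamma\eps^2)^k u_{\eps}(X_k)-\sum_{j=1}^k a_j$, and optional stopping gives $u_{\eps}(x_0)\ge \mathbb{E}_{S_{I},S_{II}}^{x_0}[(1-\gamma\eps^2)^\tau F(X_\tau)]-\tfrac{\alpha}{2}\delta$ for arbitrary $S_I$; taking the supremum over $S_I$ and then $\delta\to 0$ yields $u_{\eps}\ge u_{\eps}^{II}$. Combining, $u_{\eps}^{II}\le u_{\eps}\le u_{\eps}^{I}\le u_{\eps}^{II}$, so equality holds throughout and the game has value $u_{\eps}$. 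The main technical point is the passage through optional stopping over an infinite horizon; the role of $\gamma>0$ is precisely to supply the boundedness and the vanishing of the discount factor that make this step clean, which is why the hypothesis $\gamma\ge 0$ is essential here.
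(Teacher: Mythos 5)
Your proposal is correct and follows essentially the same route as the paper: fix an almost-optimal strategy for one player, show that the discounted process $(1-\gamma\eps^2)^k u_{\eps}(X_k)$ plus a small geometric correction is a sub/supermartingale via the DPP, and conclude by optional stopping. The only cosmetic difference is that you carry out both inequalities explicitly (and check $\tau<\infty$ a.s.), whereas the paper proves $u_{\eps}^{II}\le u_{\eps}$ and appeals to symmetry for the other direction.
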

\begin{proof}
We will give the proof of
$$u_{\eps}^{\textrm{II}}\le u_{\eps} \qquad \textrm{in} \  \Omega $$
and then this also gives us $ u_{\eps}^{\textrm{I}}\ge u_{\eps}$ in $\Omega$ by symmetry.
Since we already know that $u_{\eps}=u_{\eps}^{\textrm{I}}=u_{\eps}^{\textrm{II}}=F$ in $\Gamma_{\eps}$, we can obtain our desired result in that case. 

We consider the following situation: Each turn, Player II takes a strategy which almost minimizes $u_{\eps}$. 
More precisely, for some fixed $\eta>0$ and given $x_k\in \Omega$, Player II takes a point $x_{k+1}\in B_{\eps}(x_k)$ such that
$$u_{\eps}(x_{k+1})\le  \inf_{B_{\eps}(x_k)} u_{\eps} +\eta 2^{-k}.$$
We denote by $S_{\textrm{II}}^0$ this strategy. 
Then, we have for any strategy $S_{\textrm{I}}$,
\begin{align*}
&\mathbb{E}_{S_{\textrm{I}},S_{\textrm{II}}^0}^{x_0}[(1-\gamma\eps^2)^{k+1}u_{\eps}(x_{k+1})+\eta 2^{-k}|\mathcal{F}_k]
\\ &\le (1-\gamma\eps^2)^{k+1}\bigg\{ \frac{\alpha}{2}\bigg( \inf_{B_{\eps}(x_k)}u_{\eps}+\eta 2^{-k} 
+ \sup_{B_{\eps}(x_k)}u_{\eps}\bigg)+\beta\kint_{B_{\eps}(x_k)}u_{\eps}(y)dy\bigg\}+\eta 2^{-k}
\\ & \le (1-\gamma\eps^2)^{k}u_{\eps}(x_k)+\eta 2^{-(k-1)},
\end{align*} 
and this yields that $M_k=(1-\gamma\eps^2)^{k} u_{\eps}(x_k)+\eta 2^{-(k-1)}$ is a supermartingale.
Since $F$ is bounded, we can observe that $M_k$ is also bounded. 
Therefore, by the optional stopping theorem, we have
\begin{align*}
u_{\textrm{II}}^{\eps}(x_0)&=\inf_{S_{\textrm{II}}}\sup_{S_{\textrm{I}}}\mathbb{E}_{S_{\textrm{I}},S_{\textrm{II}}}^{x_0}[(1-\gamma \eps^2)^{\tau} F(X_{\tau})]
\\ & \le \sup_{S_{\textrm{I}}}\mathbb{E}_{S_{\textrm{I}},S_{\textrm{II}}^0}^{x_0}[ (1-\gamma\eps^2)^{\tau}F(X_{\tau}) + \eta 2^{-\tau}]
\\ & \le \sup_{S_{\textrm{I}}}\mathbb{E}_{S_{\textrm{I}},S_{\textrm{II}}^0}^{x_0}[ u(x_0) + 2\eta ]
\\ & = u(x_0) + 2\eta ,
\end{align*}
and hence, we can finish the proof.
\end{proof}

\section{Regularity estimates}
\label{sec:reg}

Regularity is one of the important research subjects in PDE theory; It gives us some information about the behavior of solutions.
Strictly, the DPP problems are different from PDE problems, nevertheless, we can still discuss the regularity of solutions.
There have been a number of regularity results for game values of tug-of-war games, for example, \cite{MR3494400,MR3698169,MR3846232,MR4125101}. 
In this section, we investigate the regularity issue for \eqref{dpp_damp}.
In particular, we present interior and boundary regularity estimates of our game value function $u_{\eps}$ satisfying \eqref{dpp_damp}.
This is important in itself and also essential for discussing the convergence of the value function.

We first state and prove our interior regularity result. Our method is based on the coupling method, which has been used in previous studies, for example, \cite{MR3846232,MR3494400}.
\begin{theorem}\label{intes}Assume that $B_{2}(0) \subset \Omega$. 
Suppose that $u_{\eps}$ satisfies \eqref{dpp_damp} for $ 0< \alpha < 1 $ and $\eps>0$ with $\gamma\eps^2<\frac{1}{2}$.
Then we have
$$ |u_{\eps} (x) - u_{\eps} (z) | \le C ||u_{\eps}||_{L^{\infty}(B_2)} ( |x-z| + \eps) ,$$ where $x, z \in B_{1}(0)$ and $C>0$ only depends on $ \alpha, \gamma$ and $n$.
\end{theorem}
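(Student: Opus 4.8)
The plan is to use the coupling (or cancellation) method, running two copies of the process of Section~\ref{ssec:tow} simultaneously on a single probability space. Fix $x,z\in B_1(0)$, let $\{X_k\}$ start at $x$ and $\{Z_k\}$ start at $z$, and couple the move-type variable $\xi_k$ together with the tug-of-war coin so that in every round both games execute the same type of move and, on a tug-of-war round, the same player wins. Let $\tau$ be the first time that either token leaves $B_2(0)$ or the two tokens coalesce, i.e.\ $X_k=Z_k$. Because $u_\eps$ is the game value, the goal is to choose strategies under which $(1-\gamma\eps^2)^k(u_\eps(X_k)-u_\eps(Z_k))$ is a submartingale, so that by optional stopping $u_\eps(x)-u_\eps(z)\le\mathbb{E}\big[(1-\gamma\eps^2)^\tau(u_\eps(X_\tau)-u_\eps(Z_\tau))\big]$, and then to show that the tokens coalesce before exiting $B_2$ with overwhelming probability.

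The couplings of the displacements are the heart of the argument. On a random-walk round I would use reflection coupling: writing $v_k=(X_{k-1}-Z_{k-1})/|X_{k-1}-Z_{k-1}|$, move $X$ by $\eps w_k$ and $Z$ by the reflection of $\eps w_k$ across the hyperplane orthogonal to $v_k$. Then $X_k-Z_k$ stays parallel to $v_k$, and as long as $d_{k-1}:=|X_{k-1}-Z_{k-1}|>\eps$ one has $d_k=d_{k-1}+2\eps\langle w_k,v_k\rangle$ with $\langle w_k,v_k\rangle$ of mean zero, so $d_k$ is a martingale on such rounds; only at the last scale $d_{k-1}\le\eps$ does the nonnegativity of $d_k$ force a small upward correction, which is precisely where the additive $\eps$ in the estimate originates. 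On a tug-of-war round I would instead couple the winner's move so that the two tokens are pulled toward each other, giving $d_k=(d_{k-1}-2\eps)_+$. Combining the two regimes, the expected one-step change of $d_k$ is at most $-2\alpha\eps$ away from the last scale, so $d_k$ has a strict negative drift; this contraction is what upgrades a Hölder bound to the asymptotic Lipschitz bound.

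With the coupling fixed I would encode the drift in an auxiliary increasing concave function $g$ with $g(0)=0$ and $g(t)\approx t$, chosen by solving the associated one-step (discrete) inequality so that $M_k=(1-\gamma\eps^2)^k g(d_k)$ is a bounded supermartingale: Jensen's inequality with the concavity of $g$ handles the noise rounds, the contraction handles the tug-of-war rounds, and the discount only helps since $\gamma\ge0$ and $(1-\gamma\eps^2)<1$. Optional stopping for $M_k$ then controls the exit-versus-coalescence competition and reduces the whole estimate to the probability bound. On the event of coalescence the terminal contribution $u_\eps(X_\tau)-u_\eps(Z_\tau)$ vanishes exactly, while on the event of exit it is at most $2\|u_\eps\|_{L^\infty(B_2)}$, so that
\[ u_\eps(x)-u_\eps(z)\le 2\|u_\eps\|_{L^\infty(B_2)}\,\mathbb{P}\big(\text{exit }B_2\text{ before coalescence}\big), \]
and the supermartingale $M_k$, compared against $g(d_0)=g(|x-z|)$, bounds the right-hand probability by $C(|x-z|+\eps)$; exchanging the roles of $x$ and $z$ yields the absolute value.

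The main obstacle is the tug-of-war round, because $\sup_{B_\eps}$ and $\inf_{B_\eps}$ are nonlinear and the game is adversarial, so one cannot simply decree that both tokens move toward each other. I would have to argue that, since the same player wins in both coupled games, the maximizer's near-optimal move in the $X$-game can be mirrored in the $Z$-game so that the worst case for the distance is still the contraction $d_k=(d_{k-1}-2\eps)_+$ while the submartingale inequality for $(1-\gamma\eps^2)^k(u_\eps(X_k)-u_\eps(Z_k))$ survives via the DPP \eqref{dpp_damp}. The remaining delicate point is the probability estimate together with the constant bookkeeping: verifying through the supermartingale $M_k$ that the per-round contraction $2\alpha\eps$ dominates the $O(\eps^2)$ concavity correction uniformly for $\gamma\eps^2<\tfrac12$, that a token starting in $B_1$ cannot diffuse out of $B_2$ faster than the $\sim|x-z|/\eps$ rounds needed to coalesce, and that the last-scale effect contributes exactly one additive $\eps$, so that the final constant $C$ depends only on $\alpha$, $\gamma$ and $n$.
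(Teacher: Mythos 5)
Your overall strategy---couple two copies of the game and control simultaneously the value difference and the inter-token distance---is the probabilistic form of the cancellation method, and the paper's proof is its analytic counterpart. But the step you yourself flag as ``the main obstacle'' is a genuine gap, and the resolution you sketch does not work. On a tug-of-war round you need two incompatible things at once: for $(1-\gamma\eps^2)^k\big(u_\eps(X_k)-u_\eps(Z_k)\big)$ to be a submartingale you must let the winning player move his token to a near-optimal point of $u_\eps$ in $B_\eps(X_{k-1})$ (resp.\ in $B_\eps(Z_{k-1})$), while for the contraction $d_k=(d_{k-1}-2\eps)_+$ you must force both tokens to move along the segment joining them. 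Mirroring the winner's move in the other game preserves the submartingale inequality but then $d_k=d_{k-1}$ exactly, with no contraction; pulling the tokens toward each other destroys the submartingale property because $u_\eps(X_k)$ need not be close to $\sup_{B_\eps(X_{k-1})}u_\eps$. Worse, in the adversarial worst case the joint move maximizing the value difference can \emph{increase} $d_k$ by up to $2\eps$ per round, so the distance process has no a priori negative drift and your key bound $\mathbb{P}(\text{exit before coalescence})\le C(|x-z|+\eps)$ is unsupported; the additive $\eps$ is not merely a last-scale nonnegativity correction.

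The paper circumvents this by never separating the two requirements: it observes that $g(x,z)=u_\eps(x)-u_\eps(z)$ satisfies the two-point DPP \eqref{2ndpp}, in which the sup and inf are taken over the product $B_\eps(x)\times B_\eps(z)$, and compares $g$ with an explicit two-point function $f=f_1-f_2$, where $f_1(x,z)=C\omega(|x-z|)+|x+z|^2$ and $f_2$ is a step function supported on the annuli $(i-1)\eps/10<|x-z|\le i\eps/10$ with geometrically growing weights $C^{2(N-i)}\eps$. The one-step inequality \eqref{wtp_alt} is then verified for the \emph{worst-case} joint moves $(\nu,\mu)$: the $|x+z|^2$ term and the multi-scale correction $f_2$ are exactly what absorb the adversarial tug-of-war moves near the diagonal, with \cite[Lemma 5.2]{MR4125101} doing the heavy lifting away from it, and the discount yields the contradiction $\gamma\eps^2M\le -c+\eta$ at a near-maximum point of $g-f$. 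If you want a genuinely probabilistic proof you would have to replace your supermartingale in the scalar distance $d_k$ by a supermartingale in a two-point comparison function of $(X_k,Z_k)$ of this type, at which point the argument essentially becomes the paper's.
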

\begin{proof}
We first set an appropriate auxiliary function (see also Section 2.2 in \cite{MR4125101}). 
Define an increasing function $\omega:[0,\infty)\to[0,\infty)$ satisfying
$$\omega(t) = t- \omega_{0}t^{\frac{3}{2}}  \qquad \textrm{for} \quad 0 \le t \le \omega_{1} := (3 \omega_{0})^{-2},$$ where $ \omega_{0} $ is a fixed constant with $\omega_0>1/2$. 
Observe that $$ \omega '(t) = 1-\frac{3}{2}\omega_{0}t^{\frac{1}{2}} \in \bigg[ \frac{1}{2},1 \bigg] \qquad \textrm{for} \ \ 0 \le t \le \omega_{1}$$ and$$ \omega ''(t) = -\frac{3}{4}\omega_{0}t^{-\frac{1}{2}} <0 \qquad \textrm{for} \ \ 0 \le t \le \omega_{1}.$$
Then we see that $\omega$ is increasing, strictly concave and $C^{2}$ in $(0, \omega_1]$.
Let $f_1$ and $f_2$ be as follows:
\[
\begin{split}
f_1(x,z)=C\omega(|x-z|)+|x+z|^2\,
\end{split}
\]
and
\begin{equation*} f_{2}(x, z) = \left\{ \begin{array}{ll}
C^{2(N-i)} \eps & \textrm{if $(x, z) \in A_{\textrm{I}}$,}\\
0 & \textrm{if $|x-z|>N \eps / 10 $,}
\end{array} \right.
\end{equation*}
where $C>1$ and
$$A_{\textrm{I}} = \{ (x , z) \in \mathbb R ^ {2n} : (i-1) \eps / 10 < |x-z| \le i \eps / 10 \} $$ for $ i = 0, 1, ... , N$, where $N$ is a sufficiently large number satisfying $N>100C$.  
Then we consider $f=f_1-f_2$ as an auxiliary function.

Now we assume that $$\sup_{B_2\times B_2} (u_{\eps}(x)-u_{\eps}(z)) \le 1$$
by using a normalization.
Let $T:=\{(x,x)  : x\in \Omega\} $ be the diagonal set of $\Omega$.
By using $f_1\geq 1$ in $(B_{2}\times B_{2}) \backslash (B_{1}\times B_{1})$,
we see that 
\begin{equation} \label{as2}
\begin{split}
u_{\eps}(x)-u_{\eps}(z)-f(x,z)
&= u_{\eps}(x)-u_{\eps}(z)-f_1(x,z)+f_2(x,z)\\ 
&\leq \max f_2
\le C^{2N}\eps 
\end{split}
\end{equation}
for $(x,z) \in (B_{2}\times B_{2}) \backslash ((B_{1}\times B_{1})\backslash T)$.
Since we can assume $x=-z$ with an appropriate translation, it is sufficient to show that
\begin{align}\label{wts_o}
\sup_{(x,z) \in  B_{2}\times B_{2}  } (u_{\eps}(x)-u_{\eps}(z)-f(x,z))\le C^{2N}\eps 
\end{align}
to derive our desired regularity result.

Suppose not. Then we have 
\begin{align} \label{as}
M := \sup_{(x,z) \in  (B_{1}\times B_{1}) \backslash T } (u_{\eps}(x)-u_{\eps}(z)-f(x,z)) > C^{2N}\eps .
\end{align}
We can obtain \eqref{wts_o} if we show that \eqref{as} contradicts. 
Observe that
\begin{align*}
M= \sup_{(x,z) \in  B_{2}\times B_{2}  } (u_{\eps}(x)-u_{\eps}(z)-f(x,z)) 
\end{align*}
in this case.
Set $g(x,z)=u_{\eps}(x)-u_{\eps}(z)$.
Then for small enough $\eta >0$, we can select $(x_{1},z_{1}) \in  (B_{1}\times B_{1}) \backslash T  $ such that
\begin{align*}
M \le u_{\eps}(x_{1})-u_{\eps}(z_{1})-f(x_{1},z_{1})+\eta=g(x_{1},z_{1})-f(x_{1},z_{1})+\eta  .
\end{align*}

For $x\in \mathbb{R}^n$, $\nu\in B_1$ and a measurable function $\psi$, we define
$$\mathcal{A}_{\eps}\psi(x,\nu)= \alpha \psi (x+\eps \nu)+\beta \kint_{B_1}\psi(x+\eps y) dy.$$
We see that
\begin{align*}
\mathcal{A}_{\eps}u_{\eps}(x,\nu)-\mathcal{A}_{\eps}u_{\eps}(z,\mu)
=\alpha g(x+\eps\nu,z+\eps\mu)+\beta \kint_{B_1}g(x+\eps y , z+\eps y)dy.
\end{align*}
If we set a functional $G_{\eps}$ such that
$$G_{\eps}\Psi(x,z,\nu,\mu)=\alpha \Psi (x+\eps \nu, z+\eps \mu)+ \beta\kint_{B_1}\Psi(x+\eps y,z+\eps y)dy,$$
we have 
$$\mathcal{A}_{\eps}u_{\eps}(x,\nu)-\mathcal{A}_{\eps}u_{\eps}(z,\mu) = G_{\eps}g(x,z,\nu,\mu).$$
We can also check without difficulty that $\Psi_1\le\Psi_2$ implies $G_{\eps}\Psi_1\le G_{\eps}\Psi_2 $.
Since we have assumed
$g\le f+M$ in $B_2\times B_2$, we get
\begin{align*}
G_{\eps}g(x_{1},z_{1},\nu,\mu)\le M+G_{\eps}f(x_{1},z_{1},\nu,\mu) .
\end{align*} 

Observe that
\begin{align}\label{2ndpp}\begin{split}
g(x,z)=(1-\gamma\eps^2)\bigg\{ \frac{\alpha}{2}\bigg(&\sup_{B_\eps (x)\times B_{\eps}(z)}g+\inf_{B_\eps (x)\times B_{\eps}(z)}g \bigg) \\&
+\beta\kint_{B_1}g(x+\eps h, z+\eps h)dy \bigg\}.
\end{split}
\end{align}
It can be represented by
\begin{align}\label{2ndpp2}
g(x,z)=\frac{1-\gamma\eps^2}{2}\bigg(\sup_{\substack{|\nu|\le1\\|\mu|\le1}}G_{\eps}g(x,z,\nu,\mu)
+\inf_{\substack{|\nu|\le1\\|\mu|\le1}}G_{\eps}g(x,z,\nu,\mu)\bigg).
\end{align}
Then, we can derive
\begin{align*}
M&\le g(x_{1},z_{1})-f(x_{1},z_{1})+\eta
\\ & = \frac{1-\gamma\eps^2}{2} \bigg(\sup_{\substack{|\nu|\le1\\|\mu|\le1}}G_{\eps}g(x_1,z_1,\nu,\mu)
+\inf_{\substack{|\nu|\le1\\|\mu|\le1}}G_{\eps}g(x_1,z_1,\nu,\mu)\bigg)-f(x_{1},z_{1})+\eta
\\ & \le \frac{1-\gamma\eps^2}{2} \bigg(\sup_{\substack{|\nu|\le1\\|\mu|\le1}}G_{\eps}f(x_1,z_1,\nu,\mu)
+\inf_{\substack{|\nu|\le1\\|\mu|\le1}}G_{\eps}f(x_1,z_1,\nu,\mu)\bigg)-f(x_{1},z_{1})
\\ & \qquad \qquad +(1-\gamma\eps^2)M+\eta
\end{align*}
and this is rearranged as
\begin{align*}
\gamma \eps^2 M \le \frac{1-\gamma\eps^2}{2} \bigg(\sup_{\substack{|\nu|\le1\\|\mu|\le1}}G_{\eps}f(x_1,z_1,\nu,\mu)
+\inf_{\substack{|\nu|\le1\\|\mu|\le1}}G_{\eps}f(x_1,z_1,\nu,\mu)\bigg)-f(x_{1},z_{1})+\eta.
\end{align*}
Hence, we deduce the contradiction if we show
\begin{align}\label{wtp}
(1-\gamma\eps^2) \bigg(\sup_{\substack{|\nu|\le1\\|\mu|\le1}}G_{\eps}f(x,z,\nu,\mu)
+\inf_{\substack{|\nu|\le1\\|\mu|\le1}}G_{\eps}f(x,z,\nu,\mu)\bigg)-2f(x,z)<2\gamma \eps^2 M
\end{align}
for any $(x,z)\in B_2 \times B_2$.
Since $M>0$, it is enough to prove that
\begin{align}\label{wtp_alt}
(1-\gamma\eps^2) \bigg(\sup_{\substack{|\nu|\le1\\|\mu|\le1}}G_{\eps}f(x,z,\nu,\mu)
+\inf_{\substack{|\nu|\le1\\|\mu|\le1}}G_{\eps}f(x,z,\nu,\mu)\bigg)-2f(x,z)<0.
\end{align}

We first assume that $|x-z|> N\eps/10$.
In that case, we have $ f_2=0$ and this implies $f=f_1$.
Hence, we can rewrite \eqref{wtp_alt} by
\begin{align}\label{wtp_alt2}
(1-\gamma\eps^2) \bigg(\sup_{\substack{|\nu|\le1\\|\mu|\le1}}G_{\eps}f_1(x,z,\nu,\mu)
+\inf_{\substack{|\nu|\le1\\|\mu|\le1}}G_{\eps}f_1(x,z,\nu,\mu)\bigg)-2f_1(x,z)<0.
\end{align}
In the proof of \cite[Lemma 5.2]{MR4125101}, one can find that $f_1$ satisfies
\begin{align}
\sup_{\substack{|\nu|\le1\\|\mu|\le1}}G_{\eps}f_1(x,z,\nu,\mu)
+\inf_{\substack{|\nu|\le1\\|\mu|\le1}}G_{\eps}f_1(x,z,\nu,\mu)-2f_1(x,z)<0.
\end{align}
Then it follows that 
\begin{align*}
 & (1-\gamma\eps^2) \bigg(\sup_{\substack{|\nu|\le1\\|\mu|\le1}}G_{\eps}f_1(x,z,\nu,\mu)
+\inf_{\substack{|\nu|\le1\\|\mu|\le1}}G_{\eps}f_1(x,z,\nu,\mu)\bigg)-2f(x,z)  
\\ & < -2\gamma\eps^2f(x,z)
\\ & <0
\end{align*}
since $f_1(x,z)>0$ when $x\neq z$, and this yields \eqref{wtp_alt2}.

Next, we consider the other case.
If $|x-z|\le N\eps/10$, 
we first observe that 
$$u(x)-u(x)-f(x,x)= C^{2N}\eps -4|x|^{2} <M$$
and thus \eqref{as} cannot occur when $x=z$. 
Hence, we only need to show \eqref{wtp_alt} when $0<|x-z| \le \frac{N}{10} \eps$.
Since
\begin{align*}
\big||a| -|b| \big| \le |a-b| 
\end{align*}
for any $a,b \in \mathbb{R}^{n}$, 
it follows that
\begin{align*}
\big||(x+\eps v)-(z+ \eps w)| -|x-z| \big| \le \eps |v-w| .
\end{align*}
Now we get
\begin{align*}
|f_{1}((x,z)+\eps (v,w))-f_{1}(x,z)| & \le 
C\eps |v-w| + 2\eps |\langle x+z,v+w \rangle|+\eps^{2}|v+w|^{2}
\\ & \le 2C\eps + 8\eps +4\eps^{2}
\\ & \le 3C\eps 
\end{align*}
for any unit vectors $v$ and $w$, and sufficiently large $C$. 
Then we obtain
\begin{align}\label{estf1}\begin{split}
&(1-\gamma\eps^2) \sup_{\substack{|\nu|\le1\\|\mu|\le1}}G_{\eps}f_{1}(x,z,\nu,\mu)-f_{1}(x,z) \\ & = (1-\gamma\eps^2)\bigg( \sup_{\substack{|\nu|\le1\\|\mu|\le1}}G_{\eps}f_{1}(x,z,\nu,\mu)-f_{1}(x,z) \bigg)+\gamma \eps^2 f_1(x,z)
\\ &\le (1-\gamma\eps^2)3C\eps +\gamma\eps^2 (2C+4)
\\ & \le 3(1+\gamma)C\eps .\end{split}
\end{align}
On the other hand, we see that
$$\sup_{\substack{|\nu|\le1\\|\mu|\le1}}G_{\eps}f(x,z,\nu,\mu) \le \sup_{\substack{|\nu|\le1\\|\mu|\le1}}G_{\eps}f_1(x,z,\nu,\mu)$$
since $f_{2}\ge 0$ and $f=f_{1}-f_{2}$.
We also get
$$  \inf_{\substack{|\nu|\le1\\|\mu|\le1}}G_{\eps}f(x,z,\nu,\mu) \le  \sup_{\substack{|\nu|\le1\\|\mu|\le1}}G_{\eps}f_1 (x,z,\nu,\mu)- \sup_{\substack{|\nu|\le1\\|\mu|\le1}}G_{\eps}f_2 (x,z,\nu,\mu).$$
Recall that we have assumed that $|x-z|<N\eps/10$.
By the definition of the set $A_i$, We observe that $(x,z) \in A_{\textrm{I}}$ for some $i=1,\cdots,N$.
Then we can select $\nu, \mu $ with $|\nu|=|\mu|=1 $ such that $(x,z)+\eps(\nu,\mu) \in A_{i-1}$.
This implies 
\begin{align*}
 \sup_{\substack{|\nu|\le1\\|\mu|\le1}}G_{\eps}f_2 (x,z,\nu,\mu)& \ge \alpha f_{2}((x,z)+\eps(\nu, \mu))
\\ & = \alpha C^{2(N-i+1)}\eps 
\\ & = \alpha C^{2(N-i)}\eps \bigg(C^{2}-\frac{2}{\alpha( 1-\gamma\eps^2)}\bigg) + \frac{2}{1-\gamma\eps^2}f_{2}(x,z).
\end{align*}
Choose $C$ large enough such that
$$ C^{2}-\frac{4}{\alpha }>12(1+\gamma)C.$$
In this case, we see that $C$ satisfies
$$ C^{2}-\frac{2}{\alpha (1-\gamma\eps^2)}>6\frac{1+\gamma}{1-\gamma\eps^2}C,$$
and hence, we obtain 
\begin{align} \label{supf2} \sup_{\substack{|\nu|\le1\\|\mu|\le1}}G_{\eps}f_2 (x,z,\nu,\mu)>  6C\frac{1+\gamma}{1-\gamma\eps^2}\eps  +\frac{2 f_{2}(x,z)}{1-\gamma\eps^2}.
\end{align}
Now we observe that 
\begin{align}\label{estf2}\begin{split}  & (1-\gamma\eps^2)\inf_{\substack{|\nu|\le1\\|\mu|\le1}}G_{\eps}f(x,z,\nu,\mu)\\ &
\le  (1-\gamma\eps^2)\bigg(\sup_{\substack{|\nu|\le1\\|\mu|\le1}}G_{\eps}f_1 (x,z,\nu,\mu)- \sup_{\substack{|\nu|\le1\\|\mu|\le1}}G_{\eps}f_2 (x,z,\nu,\mu) \bigg)
\\ &  < (1-\gamma\eps^2)\bigg(  \sup_{\substack{|\nu|\le1\\|\mu|\le1}}G_{\eps}f_1(x,z,\nu,\mu) -
6C\frac{1+\gamma}{1-\gamma\eps^2}\eps  -\frac{2 f_{2}(x,z)}{1-\gamma\eps^2} \bigg)
\\ & \le (f_{1}(x,z)+3C(1+\gamma)\eps )  -
6C(1+\gamma)\eps  -  2f_{2}(x,z)
\\ & =f_1(x,z)-2f_2(x,z)-3C(1+\gamma)\eps  .\end{split}
\end{align}
Combining \eqref{estf1} and \eqref{estf2}, we finally obtain \eqref{wtp_alt}
and the proof is completed.
\end{proof}

%\subsection{Boundary regularity}
It is also important to observe how the value function $u_{\eps}$ behaves near the boundary. 
We derive a boundary regularity result for \eqref{eqmiin} here.

To discuss boundary regularity, we first need to introduce several assumptions.
We first assume that $\Omega$ satisfies an exterior sphere condition, that is,
for any $y \in \partial \Omega$, there exists $ B_{\delta}(z) \subset \mathbb{R}^{n} \backslash \Omega $ with $\delta > 0 $ such that $y \in  \partial B_{\delta}(z)$.
Moreover, we also assume that the payoff function $F$ satisfies
\begin{align} \label{bdlip}
|F(x)-F(z)| \le L_F|x-z|
\end{align}
for some $L_F>0$ and any $x,z\in \Gamma_{\eps}$.

In \cite{MR3011990}, the key step to the proof of the boundary regularity result was to estimate the stopping time $\tau$. 
To this end, the authors considered an auxiliary stochastic process with a longer stopping time than the original game.
We utilize a similar approach to obtain our desired result.
Let us construct a stochastic process as follows:
Let $y \in \partial \Omega$ and take $z \in \mathbb{R}^{n} \backslash \Omega $ with $ B_{\delta}(z) \subset \mathbb{R}^{n} \backslash \Omega $ and $y \in \partial B_{\delta}(z)$. 
And we assume that $\Omega \subset B_{R}(z)$ for some large $R>0$.
We consider the game in $B_R(z)\backslash \overline{B}_{\delta}(z)$, and fix the strategy of Player I to pull toward $z$ (we denote the strategy by $S_{\textrm{I}}^z{}$). 
We set the token cannot escape outside $\overline{B}_{R}(z)$ in this process,
and hence, it ends only if the token is located in $\overline{B}_{\delta}(z)$. 
We denote by $$ \tau^{\ast} = \inf \{ k : x_{k} \in \overline{B}_{\delta}(z)  \}.$$
Then one can obtain the following estimate of $\tau^{\ast}$ by using a similar argument to \cite[Lemma 4.5]{MR3011990}
and \cite[Lemma 5.2]{MR4299842}.  
\begin{lemma} \label{estauxsp} 
Under the setting above, we have
$$ \mathbb{E}^{x_{0}}[\tau^{\ast}] \le    \frac{C(n, \alpha,  R/\delta)( \dist(\partial B_{\delta}(y),x_{0})+o(1))}{\eps^{2}}   $$
for any $ x_{0}  \in \Omega  \subset  B_{R}(z)\backslash \overline{B}_{\delta}(z) $.
Here $o(1) \to 0$ as $\eps \to 0$.
\end{lemma}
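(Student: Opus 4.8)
The plan is to construct a nonnegative radial supermartingale adapted to the fixed strategy $S_I^z$ and then apply the optional stopping theorem, in the spirit of the exit-time estimates of \cite{MR3011990} and \cite{MR4299842}. Using the radial coordinate $r=|x-z|$ centred at $z$, I would take a comparison function $v(x)=g(|x-z|)$ with
$$ g(r)=C_\ast\big(1-(\delta/r)^{\kappa}\big), \qquad \delta\le r\le R, $$
where $\kappa>n-2$ is a fixed exponent depending only on $n$ and $C_\ast>0$ is a normalizing constant. This $g$ is smooth, increasing, strictly concave, vanishes on $\partial B_\delta(z)$, and is nonnegative on the annulus; moreover it extends to a $C^4$ function on a slightly larger interval, so that the averaging ball $B_\eps(x)$ never reaches a point where the expansion below fails.

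The key step is a Taylor expansion of the one-step operator applied to $v$. Writing $r=|X_k-z|$, Player I's fixed strategy sends the token to the radius $r-\eps$ when he wins the toss, while $g$ increasing means that Player II's worst (game-prolonging) choice moves it to at most $r+\eps$; hence for any admissible strategy $S_{II}$,
\begin{align*}
\mathbb{E}_{S_I^z,S_{II}}^{x_0}[v(X_{k+1})\mid\mathcal{F}_k]
&\le \tfrac{\alpha}{2}\big(g(r-\eps)+g(r+\eps)\big)+\beta\kint_{B_\eps(X_k)} g(|y-z|)\,dy \\
&= v(X_k)+\tfrac{\eps^2}{2}\Big[\alpha g''(r)+\tfrac{\beta}{n+2}\big(g''(r)+\tfrac{n-1}{r}g'(r)\big)\Big]+o(\eps^2).
\end{align*}
The choice $\kappa>n-2$ makes both $g''$ and $g''+\frac{n-1}{r}g'$ strictly negative on $[\delta,R]$, so the bracket is bounded above by $-2c_0$ for a constant $c_0=c_0(n,\alpha,R/\delta)>0$. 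Absorbing the $o(\eps^2)$ remainder for $\eps$ small then gives $\mathbb{E}[v(X_{k+1})\mid\mathcal{F}_k]\le v(X_k)-c_0\eps^2$ whenever $X_k$ lies in the open annulus, so that $N_k:=v(X_{k\wedge\tau^\ast})+c_0\eps^2\,(k\wedge\tau^\ast)$ is a supermartingale under every strategy of Player II.

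Applying optional stopping to the bounded times $k\wedge\tau^\ast$ and letting $k\to\infty$ by monotone convergence yields
$$ c_0\eps^2\,\mathbb{E}^{x_0}[\tau^\ast]\le v(x_0)-\mathbb{E}^{x_0}[v(X_{\tau^\ast})]\le v(x_0)+C\eps, $$
where the final term bounds the at-most-$\eps$ overshoot of the token past $\partial B_\delta(z)$ at the stopping step. Since $g$ is concave with $g(\delta)=0$, the mean value theorem gives $v(x_0)=g(|x_0-z|)\le g'(\delta)\,(|x_0-z|-\delta)=C\,\dist(x_0,\partial B_\delta(z))$; dividing through by $c_0\eps^2$ produces the claimed bound, with the $C\eps$ term collapsing into the $o(1)$ as $\eps\to0$.

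The main obstacle is making the one-step estimate uniform near the two bounding spheres. Near $\partial B_\delta(z)$ the averaging ball protrudes into $\{r<\delta\}$, and near $\partial B_R(z)$ the confinement of the token to $\overline{B}_R(z)$ modifies the dynamics; in both cases I must argue that these modifications only lower $\mathbb{E}[v(X_{k+1})\mid\mathcal{F}_k]$, because $g$ is increasing and confinement redistributes mass toward smaller radii, so the supermartingale inequality survives. Controlling the Taylor remainder uniformly on the annulus so as to extract a genuine $o(1)$ rather than a fixed multiple of $\eps$, and securing the integrability needed for optional stopping (handled cleanly by the truncation $k\wedge\tau^\ast$ together with the boundedness of $v$), are the remaining points that require care.
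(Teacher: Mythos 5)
Your proof is correct and follows essentially the same route the paper takes: the paper does not write out an argument but defers to \cite[Lemma 4.5]{MR3011990} and \cite[Lemma 5.2]{MR4299842}, which is precisely the radial concave barrier / supermartingale / optional stopping scheme you reconstruct, with your choice $g(r)=C_\ast(1-(\delta/r)^{\kappa})$, $\kappa>\max(n-2,0)$, making both $g''$ and $g''+\tfrac{n-1}{r}g'$ strictly negative on the annulus exactly as needed. Your handling of the confinement near $\partial B_R(z)$ and of the $O(\eps)$ overshoot past $\partial B_\delta(z)$ (which produces the $o(1)$ term) is also consistent with the cited arguments.
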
 

By using Lemma \ref{estauxsp}, we can derive a boundary estimate for the value function.
\begin{theorem}\label{bdes}
Assume that $\Omega$ satisfies the exterior sphere condition and $F$ satisfies \eqref{bdlip}.
Then for the value function $u_{\eps}$ with boundary data $F$, we have
\begin{align}\label{nearbd} 
& |u_{\eps}(x)-u_{\eps}(z)|  \le C(n,  R/\delta, \gamma)||F||_{C^{0,1}(\Gamma_{\eps})}( |x_0-y|+o(1)) +||F||_{C^{0,1}(\Gamma_{\eps})}\delta
\end{align} 
for any $x\in\Omega$ and $z\in \Gamma_{\eps}$.
\end{theorem}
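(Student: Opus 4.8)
The plan is to prove the pointwise comparison that $u_{\eps}(x_0)$ is close to the boundary value $F(y)$ at the contact point $y\in\partial\Omega$ furnished by the exterior sphere condition, and then pass to $u_{\eps}(z)=F(z)$ through the Lipschitz bound \eqref{bdlip}. Since $u_{\eps}=u_{\eps}^{I}=u_{\eps}^{II}$, it suffices to establish the one-sided estimate $u_{\eps}(x_0)\ge F(y)-(\text{RHS of \eqref{nearbd}})$; the reverse inequality is obtained by exchanging the roles of the two players, and the two combine into \eqref{nearbd}. Concretely, I would fix the exterior ball (renaming its center $\zeta$ to avoid the clash with $z\in\Gamma_{\eps}$) so that $\overline{B}_{\delta}(\zeta)\subset\mathbb{R}^n\backslash\Omega$ touches $\partial\Omega$ at $y$ and $\Omega\subset B_{R}(\zeta)$, take $y$ to be the boundary point nearest $x_0$ and $z\in\Gamma_{\eps}$ the associated strip point near $y$, and let Player I adopt the pull-toward-$\zeta$ strategy $S_{I}^{\zeta}$ introduced before Lemma \ref{estauxsp}.

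The first step is to localize the stopping time. Because $\overline{B}_{\delta}(\zeta)\subset\mathbb{R}^n\backslash\Omega$ and each step has length at most $\eps$, the real game must terminate (the token must enter $\Gamma_{\eps}$) no later than the auxiliary process reaches $\overline{B}_{\delta}(\zeta)$; hence $\tau\le\tau^{\ast}$ pathwise under $S_{I}^{\zeta}$, and for every strategy $S_{II}$,
\[
\mathbb{E}^{x_0}_{S_{I}^{\zeta},S_{II}}[\tau]\le\mathbb{E}^{x_0}[\tau^{\ast}]\le\frac{C(n,\alpha,R/\delta)(\dist(x_0,\partial B_{\delta}(\zeta))+o(1))}{\eps^2}
\]
by Lemma \ref{estauxsp}, where $\dist(x_0,\partial B_{\delta}(\zeta))\le|x_0-y|$ since $|y-\zeta|=\delta$.

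The second step is to split the discounted payoff. Starting from $u_{\eps}(x_0)\ge\inf_{S_{II}}\mathbb{E}_{S_{I}^{\zeta},S_{II}}[(1-\gamma\eps^2)^{\tau}F(X_{\tau})]$ and writing
\[
(1-\gamma\eps^2)^{\tau}F(X_{\tau})=F(y)-\big(1-(1-\gamma\eps^2)^{\tau}\big)F(X_{\tau})+(1-\gamma\eps^2)^{\tau}\big(F(X_{\tau})-F(y)\big),
\]
the elementary inequality $1-(1-\gamma\eps^2)^{\tau}\le\gamma\eps^2\tau$ controls the discount defect by $\gamma\eps^2\tau\,\|F\|_{L^{\infty}(\Gamma_{\eps})}$, whose expectation is at most $C\gamma\|F\|_{L^{\infty}(\Gamma_{\eps})}(|x_0-y|+o(1))$ once the bound on $\mathbb{E}[\tau]$ is inserted; this yields the factor $C(n,R/\delta,\gamma)\|F\|_{C^{0,1}(\Gamma_{\eps})}(|x_0-y|+o(1))$. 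The last term is dominated via \eqref{bdlip} by $L_{F}\,\mathbb{E}[|X_{\tau}-y|]$.

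The main obstacle is the estimate of $\mathbb{E}[|X_{\tau}-y|]$, which is where the exterior-sphere geometry, rather than the stopping-time bound already supplied by Lemma \ref{estauxsp}, does the work. Since under $S_{I}^{\zeta}$ the token is pulled radially toward $\zeta$ and the ball meets $\partial\Omega$ only at $y$, I expect to track the radial coordinate $|X_k-\zeta|$ as an (approximate) supermartingale and combine its monotone decay with the tangential spreading, whose size is governed by $\eps^2\mathbb{E}[\tau]$; this forces the exit point $X_{\tau}\in\Gamma_{\eps}$ to lie within a diffusive fluctuation of order $|x_0-y|+o(1)$ of the contact point together with a curvature-induced offset of order $\delta$, producing exactly the additive $\|F\|_{C^{0,1}(\Gamma_{\eps})}\delta$ in \eqref{nearbd}. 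Running the symmetric argument with Player II pulling toward $\zeta$ gives the matching upper bound, so that $|u_{\eps}(x_0)-F(y)|\le C(n,R/\delta,\gamma)\|F\|_{C^{0,1}(\Gamma_{\eps})}(|x_0-y|+o(1))+\|F\|_{C^{0,1}(\Gamma_{\eps})}\delta$, and I conclude through $u_{\eps}(z)=F(z)$ and $|F(z)-F(y)|\le L_{F}|z-y|$, absorbing the small $|z-y|$ into the existing error terms for the relevant $z\in\Gamma_{\eps}$.
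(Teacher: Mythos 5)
Your proposal is correct and follows essentially the same route as the paper: fix the pull-toward-center strategy $S_I^{\zeta}$, bound the discount defect via $1-(1-\gamma\eps^2)^{\tau}\le\gamma\eps^2\tau$ together with the stopping-time estimate of Lemma \ref{estauxsp}, control the exit position through the radial supermartingale for $|X_k-\zeta|$, and symmetrize over the two players before invoking \eqref{bdlip}. The one step you only sketch --- the bound $\mathbb{E}[|X_{\tau}-\zeta|]\le|x_0-\zeta|+C\eps^2\mathbb{E}[\tau]/\delta$ under the pulling strategy --- is precisely what the paper imports from \cite[Lemma 4.6]{MR3011990}, so nothing essential is missing relative to the paper's own proof.
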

\begin{proof}Since the boundary data $F$ satisfies \eqref{bdlip}, it is sufficient to estimate 
$$ \sup_{S_{\textrm{I}}}\inf_{S_{\textrm{II}}}\mathbb{E}_{S_{\textrm{I}},S_{\textrm{II}}}^{x_0}[|x_{\tau}-z|].$$

In the proof of \cite[Lemma 4.6]{MR3011990}, we can find the following estimates: 
$$\mathbb{E}_{S_{\textrm{I}}^{z},S_{\textrm{II}}}^{x_0}[|x_{\tau}-z|]\le  |x_0-z|+C\eps \mathbb{E}_{S_{\textrm{I}}^{z},S_{\textrm{II}}}^{x_0}[\tau]$$
for some universal constant $C>0$ and 
$$\eps^2 \mathbb{E}_{S_{\textrm{I}}^{z},S_{\textrm{II}}}^{x_0}[\tau]\le \eps^2\mathbb{E}_{S_{\textrm{I}}^{z},S_{\textrm{II}}}^{x_0}[\tau^{\ast}]
\le C(n,  R/\delta)( \dist(\partial B_{\delta}(y),x_{0})+o(1)). $$
We also have $y \in B_{\delta}(z)$ and this implies
$$ \mathbb{E}_{S_{\textrm{I}}^{z},S_{\textrm{II}}}^{x_0}[|x_{\tau}-z|]\le C(n,  R/\delta)( |x_0-y|+o(1)) +\delta.$$
Now we observe that
\begin{align*}
\mathbb{E}_{S_{\textrm{I}}^{z},S_{\textrm{II}}}^{x_0}\big[F(x_{\tau})-&\big|F(x_{\tau})\big(1-(1-\gamma \eps^2)^{\tau}\big)\big|\big]
\\&\le
\mathbb{E}_{S_{\textrm{I}}^{z},S_{\textrm{II}}}^{x_0}[(1-\gamma \eps^2)^{\tau}F(x_{\tau})]
\\& \le \mathbb{E}_{S_{\textrm{I}}^{z},S_{\textrm{II}}}^{x_0}\big[F(x_{\tau})+\big|F(x_{\tau})\big(1-(1-\gamma \eps^2)^{\tau}\big)\big|\big]
\end{align*}
and
\begin{align*}
 \mathbb{E}_{S_{\textrm{I}}^{z},S_{\textrm{II}}}^{x_0}\big[\big|F(x_{\tau})\big(1-&(1-\gamma \eps^2)^{\tau}\big)\big|\big]
 \\ & \le  \mathbb{E}_{S_{\textrm{I}}^{z},S_{\textrm{II}}}^{x_0}\big[\big|F(x_{\tau})\big(1-(1-\gamma \tau \eps^2)\big)\big|\big]
 \\&\le  \gamma||F||_{L^{\infty}(\Gamma_{\eps})} \mathbb{E}_{S_{\textrm{I}}^{z},S_{\textrm{II}}}^{x_0}[\tau \eps^2]
 \\&\le  \gamma||F||_{L^{\infty}(\Gamma_{\eps})}C(n,  R/\delta)(  |x_0-y|+o(1)).
\end{align*}
On the other hand, we can also deduce that
\begin{align*}
F(z)-&C(n,  R/\delta, L_F)( |x_0-y|+o(1)) -L_F\delta
\\& \le \mathbb{E}_{S_{\textrm{I}}^{z},S_{\textrm{II}}}^{x_0}[F(x_{\tau})] \le 
F(z)+C(n,  R/\delta, L_F)( |x_0-y|+o(1)) +L_F\delta
\end{align*}
since we have assumed \eqref{bdlip}.
Therefore, it follows that
\begin{align*}
&\sup_{S_{\textrm{I}}}\inf_{S_{\textrm{II}}}\mathbb{E}_{S_{\textrm{I}},S_{\textrm{II}}}^{x_0}\big[(1-\gamma \eps^2)^{\tau}F(x_{\tau})\big]
\\&\ge \inf_{S_{\textrm{II}}}\mathbb{E}_{S_{\textrm{I}}^{z},S_{\textrm{II}}}^{x_0}\big[(1-\gamma \eps^2)^{\tau}F(x_{\tau})\big]
\\ & \ge F(z)-C(n,  R/\delta, \gamma)||F||_{C^{0,1}(\Gamma_{\eps})}( |x_0-y|+o(1)) -||F||_{C^{0,1}(\Gamma_{\eps})}\delta .
\end{align*}
Similarly, we can also obtain 
\begin{align*}
&\sup_{S_{\textrm{I}}}\inf_{S_{\textrm{II}}}\mathbb{E}_{S_{\textrm{I}},S_{\textrm{II}}}^{x_0}\big[(1-\gamma \eps^2)^{\tau}F(x_{\tau})\big]
\\ & \le F(z)+C(n,  R/\delta, \gamma)||F||_{C^{0,1}(\Gamma_{\eps})}( |x_0-y|+o(1)) +||F||_{C^{0,1}(\Gamma_{\eps})}\delta .
\end{align*}
by fixing the strategy of Player II.
Recalling
$$u_{\eps}(x_0)= \sup_{S_{\textrm{I}}}\inf_{S_{\textrm{II}}}\mathbb{E}_{S_{\textrm{I}},S_{\textrm{II}}}^{x_0}\big[(1-\gamma \eps^2)^{\tau}F(x_{\tau})\big] $$
and $u_{\eps}(z)=F(z)$ for $z\in \Gamma_{\eps}$,
we finally obtain that for $x \in \Omega$ and $z \in \Gamma_{\eps}$,
\begin{align*}
 |u_{\eps}(x_0)-u_{\eps}(z)|   \le C(n,  R/\delta, \gamma)||F||_{C^{0,1}(\Gamma_{\eps})}( |x_0-y|+o(1)) +||F||_{C^{0,1}(\Gamma_{\eps})}\delta .
\end{align*}
\end{proof}

\section{Convergence result for \eqref{eqmiin}}
\label{sec:con}
The convergence of value functions is also an important issue for studies on stochastic games.
In this section, we investigate the convergence of our value function as the step size $\eps$ goes to zero.
We will show this by using the notion of viscosity solutions (cf. \cite{MR3011990,MR2875296,MR3441079}, etc).
Meanwhile, such a convergence result can give us a corresponding regularity result for the model problem \eqref{eqmiin}.
To this end, we also need to verify the uniqueness of the solution to \eqref{eqmiin}.

We begin this section with the definition of a viscosity solution.
\begin{definition} \label{vissol} 
A function $u \in C(\Omega) $ is a viscosity solution to \eqref{eqmiin} if
the following conditions hold:
\begin{itemize}
\item[(a)] for all $ \varphi \in C^{2}(\Omega ) $ touching $u$ from above at $x_{0} \in \Omega $, 
\begin{align*} 
\left\{ \begin{array}{ll}
 \Delta_{p}^{N}\varphi(x_{0})  \ge (p+n)\gamma \varphi(x_{0}) \qquad \qquad \textrm{if $ D\varphi (x_{0}) \neq 0 $,}\\
\lambda_{\max}((p-2)D^{2}\varphi(x_{0}) )  + \Delta\varphi(x_{0}) \ge (p+n)\gamma \varphi(x_{0})  \qquad \textrm{if $D\varphi (x_{0}) = 0   $.}\\
\end{array} \right. 
\end{align*}
\item[(b)] for all $ \varphi \in C^{2}(\Omega ) $ touching $u$ from below at $x_{0} \in \Omega $, 
\begin{align*} 
\left\{ \begin{array}{ll}
 \Delta_{p}^{N}\varphi(x_{0})  \le (p+n)\gamma \varphi(x_{0}) \qquad \qquad \textrm{if $ D\varphi (x_{0}) \neq 0 $,}\\
\lambda_{\min}((p-2)D^{2}\varphi(x_{0}) )  + \Delta\varphi(x_{0}) \le (p+n)\gamma \varphi(x_{0})  \qquad \textrm{if $D\varphi (x_{0}) = 0   $.}\\
\end{array} \right. 
\end{align*}
\end{itemize}
Here, the notations $\lambda_{\max}(X)$ and $\lambda_{\min}(X)$ mean the largest and the smallest eigenvalues of a symmetric matrix $X$. 
\end{definition}

The following Arzel\`{a}-Ascoli criterion is essential to show the convergence of $u_{\eps}$.
One can find the proof of this lemma in \cite[Lemma 4.2]{MR3011990}.
\begin{lemma} \label{aras}
Let $ \{ u_{\eps} : \overline{\Omega} \to \mathbb{R}, \eps>0 \} $ be a set of functions such that
\begin{itemize}
\item[(a)] there exists a constant $C >0$ so that $|u_{\eps}(x)| < C $ for every $ \eps > 0$ and every $x \in \overline{\Omega} $. 
\\ \item[(b)] given $ \eta > 0$, there are constants $r_{0}$ and $\eps_{0}$ so that for every $\eps >0$ and $x,z \in \overline{\Omega}$ with $|x-z| < r_{0} $, it holds
$$ |u_{\eps}(x)-u_{\eps}(z)| < \eta .$$
\end{itemize}
Then, there exists a uniformly continuous function $u: \overline{\Omega} \to \mathbb{R} $ and a subsequence $\{ u_{\eps_{\textrm{I}}} \} $ such that $ u_{\eps_{\textrm{I}}}  $ uniformly converges to $u$  in $\overline{\Omega}$, as $i \to \infty$.
\end{lemma}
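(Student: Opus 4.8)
The plan is to run the classical Arzel\`a--Ascoli strategy, adapted to the fact that the $u_{\eps}$ need not be continuous: genuine equicontinuity is replaced by the asymptotic estimate (b), which only controls oscillation once $\eps$ is below the threshold $\eps_0$. The whole argument rests on a diagonal extraction on a countable dense set, followed by a uniform-Cauchy estimate that trades equicontinuity for a three-point splitting over a finite net.

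First I would use that $\overline{\Omega}$ is compact (bounded and closed in $\mathbb{R}^n$), hence separable, and fix a countable dense subset $D=\{x_1,x_2,\dots\}\subset\overline{\Omega}$. By the uniform bound (a), for each fixed $x_j$ the real sequence $\{u_{\eps}(x_j)\}_{\eps}$ lies in $[-C,C]$; applying Bolzano--Weierstrass successively to $x_1,x_2,\dots$ and diagonalizing, I extract a single sequence $\eps_i\to 0$ along which $u_{\eps_i}(x_j)$ converges for every $x_j\in D$. This defines $u$ on $D$ by $u(x_j):=\lim_i u_{\eps_i}(x_j)$, with $|u(x_j)|\le C$.

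The core step is to upgrade pointwise convergence on $D$ to uniform convergence on $\overline{\Omega}$ along $\{\eps_i\}$. Given $\eta>0$, I take $r_0,\eps_0$ from (b). By compactness, finitely many balls $B_{r_0}(x_{k_1}),\dots,B_{r_0}(x_{k_m})$ with centers in $D$ cover $\overline{\Omega}$. For any $x\in\overline{\Omega}$ I choose a covering center $x_{k_\ell}$ with $|x-x_{k_\ell}|<r_0$ and split
\[
|u_{\eps_i}(x)-u_{\eps_j}(x)|\le |u_{\eps_i}(x)-u_{\eps_i}(x_{k_\ell})| + |u_{\eps_i}(x_{k_\ell})-u_{\eps_j}(x_{k_\ell})| + |u_{\eps_j}(x_{k_\ell})-u_{\eps_j}(x)|.
\]
Once $\eps_i,\eps_j<\eps_0$ the first and third terms are each $<\eta$ by (b); the middle term is $<\eta$ for all large $i,j$ because there are only finitely many centers and $u_{\eps_i}$ converges at each. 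Hence $\sup_{\overline{\Omega}}|u_{\eps_i}-u_{\eps_j}|<3\eta$ for $i,j$ large, so $\{u_{\eps_i}\}$ is uniformly Cauchy and converges uniformly to a limit $u:\overline{\Omega}\to\mathbb{R}$ extending the values already fixed on $D$.

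Finally, $u$ is continuous as a uniform limit, and I would read off uniform continuity directly from (b): for $x,z$ with $|x-z|<r_0$, passing to the limit $i\to\infty$ in $|u_{\eps_i}(x)-u_{\eps_i}(z)|<\eta$ (valid once $\eps_i<\eps_0$) gives $|u(x)-u(z)|\le\eta$, a modulus of continuity independent of the points. The main obstacle is precisely that the $u_{\eps}$ are not equicontinuous in the classical sense and may be discontinuous; the three-term splitting over a finite net, together with the threshold $\eps_0$, is what lets condition (b) substitute for genuine equicontinuity in the compactness argument.
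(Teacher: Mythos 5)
Your proof is correct and follows essentially the same argument as the source the paper points to for this lemma (\cite[Lemma 4.2]{MR3011990}): diagonal extraction over a countable dense subset of the compact set $\overline{\Omega}$, followed by the three-term splitting over a finite $r_0$-net in which the threshold $\eps_0$ from (b) substitutes for genuine equicontinuity, yielding a uniformly Cauchy subsequence whose limit inherits the modulus of continuity. Note only that condition (b) as printed reads ``for every $\eps>0$'' where the intended (and standard) hypothesis is ``for every $\eps<\eps_0$''; you have correctly used the intended reading, without which $\eps_0$ would play no role.
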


By combining the above lemma and the regularity results in the previous section (Theorem \ref{intes} and Theorem \ref{bdes}), 
we can prove the convergence of the value function under the assumption of Theorem \ref{bdes}.
\begin{theorem}\label{conv}Assume that $\Omega$ satisfies the exterior sphere condition and  $ F \in C^1(\Gamma_{\eps}) $
satisfies \eqref{bdlip}.
Let $u_{\eps}$ denote the solution to \eqref{dpp_damp} with boundary data $ F  $ for each $\eps>0$.
Then, there exist a function $ u: \overline{\Omega}_{\eps} \to \mathbb{R}$ and a subsequence 
$ \{ \eps_{\textrm{I}} \} $ such that 
$$ u_{\eps_{\textrm{I}}} \to u \qquad \textrm{uniformly in} \quad \overline{\Omega}.$$
Furthermore, $u$ is a viscosity solution to \eqref{eqmiin}.
\end{theorem}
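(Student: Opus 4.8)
The plan is to verify the two hypotheses of the Arzel\`{a}--Ascoli criterion, Lemma \ref{aras}, for the family $\{u_{\eps}\}$ and then read off the conclusion directly. Condition (a), the uniform bound, is immediate: by the maximum principle recorded in Remark \ref{maxcom} we have $||u_{\eps}||_{L^{\infty}(\Omega)} \le ||F||_{L^{\infty}(\Gamma_{\eps})} =: C_0$ for every $\eps>0$, a bound independent of $\eps$ (and $u_{\eps}=F$ on $\Gamma_{\eps}$). The whole difficulty is therefore concentrated in condition (b), the asymptotic equicontinuity, and this is where the interior estimate (Theorem \ref{intes}) and the boundary estimate (Theorem \ref{bdes}) must be combined.

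Fix $\eta>0$. I would split $\overline{\Omega}$ into an interior region and a boundary layer governed by a threshold $d_0>0$ still to be chosen, and treat two regimes for a pair $x,z$ with $|x-z|<r_0$. In the interior regime, where $\dist(x,\partial\Omega)\ge d_0$, the ball $B_{d_0/2}(x)$ sits compactly inside $\Omega$, so the rescaled form of Theorem \ref{intes} applies and gives
\begin{align*}
|u_{\eps}(x)-u_{\eps}(z)| \le \frac{C C_0}{d_0}\,(|x-z|+\eps)
\end{align*}
for $z\in B_{d_0/2}(x)$, with $C=C(\alpha,\gamma,n)$. Since the right-hand side carries both $|x-z|$ and $\eps$, this is made smaller than $\eta$ by requiring $|x-z|<r_0$ and $\eps<\eps_0$ with $r_0,\eps_0$ small relative to $d_0$ and $\eta$.

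The boundary regime is the crux. If $\dist(x,\partial\Omega)<d_0$, then $|x-z|<r_0$ forces $z$ into a layer of width $d_0+r_0$ as well. Let $y_x,y_z\in\partial\Omega$ be nearest boundary points; the exterior sphere condition furnishes exterior balls at each, so Theorem \ref{bdes} applies at both. Comparing through the boundary,
\begin{align*}
|u_{\eps}(x)-u_{\eps}(z)| \le |u_{\eps}(x)-F(y_x)| + |F(y_x)-F(y_z)| + |F(y_z)-u_{\eps}(z)|,
\end{align*}
I would bound the two outer terms by Theorem \ref{bdes}, each of the form $C(n,R/\delta,\gamma)||F||_{C^{0,1}(\Gamma_{\eps})}(d_0+r_0+o(1)) + ||F||_{C^{0,1}(\Gamma_{\eps})}\delta$, and the middle term by the Lipschitz estimate \eqref{bdlip} as $L_F|y_x-y_z|\le L_F(2d_0+2r_0)$. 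A key point is that the constant in Theorem \ref{bdes} depends on $R/\delta$, so $\delta$ must be fixed first: I would choose $\delta$ small enough that $||F||_{C^{0,1}(\Gamma_{\eps})}\delta<\eta/4$, which simultaneously fixes the constant $C(n,R/\delta,\gamma)$; only then do I shrink $d_0$, then $r_0$, and finally $\eps_0$ (the last to absorb the $o(1)$, which tends to $0$ as $\eps\to0$), so that the remaining contributions total less than $\eta$.

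The main obstacle is exactly this gluing of the two estimates into a single modulus valid up to $\partial\Omega$: the interior estimate degenerates like $1/\dist(\cdot,\partial\Omega)$ and is useless in the boundary layer, while the boundary estimate carries a constant that grows as the exterior-ball radius $\delta$ shrinks. The delicate bookkeeping is therefore the order of the quantifiers---fix $\delta$ (hence the boundary constant) to control the additive $||F||_{C^{0,1}(\Gamma_{\eps})}\delta$ term, then shrink the layer width $d_0$, the separation $r_0$, and the scale $\eps_0$---so that the interior and boundary moduli overlap consistently on the transition region for all $\eps<\eps_0$. Once condition (b) is established, Lemma \ref{aras} produces a uniformly continuous limit $u$ and a subsequence $\{\eps_i\}$ with $u_{\eps_i}\to u$ uniformly on $\overline{\Omega}$, which is the assertion.
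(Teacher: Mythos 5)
Your proposal is correct and follows essentially the same route as the paper: uniform boundedness from the maximum principle plus equicontinuity from Theorem \ref{intes} (interior) and Theorem \ref{bdes} (boundary), fed into the Arzel\`{a}--Ascoli criterion of Lemma \ref{aras}. In fact you supply more detail on the gluing of the two estimates (the order in which $\delta$, $d_0$, $r_0$, $\eps_0$ are fixed) than the paper does; the paper's proof goes on to identify the limit as a viscosity solution of \eqref{eqmiin}, but that is beyond what the stated theorem asks for.
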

\begin{proof}
We first observe the existence of a uniform convergent subsequence of $\{u_{\eps}\}$.
By using the definition of $u_{\eps} $, we have
 $$||u_{\eps} ||_{L^{\infty}(\Omega)} \le ||F||_{L^{\infty}(\Omega)} < \infty $$
for any $\eps > 0$ and thus $u_{\eps} $ are uniformly bounded. 
We also see the equicontinuity in the sense of Lemma \ref{aras} by Theorem \ref{intes} and Theorem \ref{bdes},
and hence we can apply the Arzel\`{a}-Ascoli criterion.
Therefore, we can find a uniformly convergent subsequence, still denoted by $u_{\eps}$, to a function $u \in C(\overline{\Omega})$.

Next, we verify that $u$ is a viscosity solution to \eqref{eqmiin}.
We first observe that
$$ u(x) = \lim_{\eps \to 0}u_{\eps}(x) = F(x)$$
for any $x \in \partial\Omega$.
Thus, it is sufficient to show that $u$ is a solution to 
$$\Delta_{p}^{N} u-(p+n)\gamma u=0   \qquad \textrm{in} \ \Omega$$
in the viscosity sense.
Without loss of generality, it is enough to prove that $u$ satisfies (b) in Definition \ref{vissol}. 

We fix $x \in \Omega$ and consider a small neighborhood of $x$ such that 
$ B_{R}(x)  \subset \Omega $ for some $R>0$.
For each $\eps>0$, we can consider a function $\varphi \in C^{2}(B_{R}(x))$ touching $u$ from below at $x$.
Since $u_{\eps}$ converges uniformly to $u$, for sufficiently small $\eps >0$, we can find a convergent sequence $\{x_{\eps}\}$
satisfying the following property 
$$ (u_{\eps}-\varphi)(z) \ge (u_{\eps}-\varphi)(x_{\eps})-\eta_{\eps} \qquad\textrm{for any} \ \ z\in B_{R}(x) $$
for some $x_{\eps}\in B_{R}(x)$ and small $\eta_{\eps}>0$. 
We observe that $x_{\eps} \to x$ as $ \eps \to 0$ in that case.

Recall \eqref{deft}. For $x \in \Omega$,
we have
\begin{align*}u_{\eps}(x)=
\mathcal{T}_{\eps}u_{\eps}(x) =(1-\gamma \eps^2)\bigg\{ \frac{\alpha}{2}\bigg(\sup_{B_\eps(x)}u_{\eps} +\inf_{B_\eps(x)}u_{\eps}  \bigg)
+\beta\kint_{B_\eps(x)}u_{\eps} (y)dy \bigg\}.
\end{align*}
Set $ \psi = \varphi +( u_{\eps}-\varphi)(x_{\eps})$.
Then we observe that
$u_{\eps} \ge \psi - \eta_{\eps}$ in $B_{R}(x) $.
By using this, it follows that
\begin{align*}
u_{\eps}(x_{\eps})=\mathcal{T}_{\eps}u_{\eps}(x_{\eps}) 
 \ge \mathcal{T}_{\eps}\big(\psi(x_{\eps})-\eta_{\eps}) =\mathcal{T}_{\eps}\psi(x_{\eps})-
 (1-\gamma \eps^2)\eta_{\eps}
\end{align*}
and 
\begin{align*}
 \mathcal{T}_{\eps}\psi(x_{\eps}) =\mathcal{T}_{\eps}\varphi(x_{\eps})+(1-\gamma \eps^2)( u_{\eps}-\varphi)(x_{\eps}).
\end{align*}
Hence, we get  
\begin{align*}
u_{\eps}(x_{\eps}) \ge \mathcal{T}_{\eps}\varphi(x_{\eps})+(1-\gamma \eps^2)\big(( u_{\eps}-\varphi)(x_{\eps})-\eta_{\eps}\big)
\end{align*}
and this yields 
\begin{align} \label{2est1} \gamma\eps^2  (u_{\eps}-\varphi)(x_{\eps})+(1-\gamma \eps^2)\eta_{\eps} \ge \mathcal{T}_{\eps}\varphi (x_{\eps})-\varphi (x_{\eps}). 
\end{align}
%We remark that the first inequality holds since $\varphi$ touches $u_{\eps}$ from below.

By the Taylor expansion, we can compute that 
\begin{align*} (1-\gamma \eps^2)&\bigg\{ \frac{\alpha}{2}\bigg(\sup_{B_\eps(x)}\varphi  +\inf_{B_\eps(x)}\varphi   \bigg)
+\beta\kint_{B_\eps(x)}\varphi  (y)dy \bigg\}
\\&= \varphi(x) + \frac{1}{2}\bigg( \frac{\Delta_{p}^{N} \varphi(x)}{p+n}-\gamma \varphi(x) \bigg)\eps^2+o(\eps^2).
\end{align*}
Since we have assumed that $\varphi$ has the $C^2$-regularity, $\varphi$ attains its local minimum at a point $  x_{\eps}^1 $ in $\overline{B}_\eps(x)$, that is,
$$\varphi(x_{\eps}^1)=\inf_{B_{\eps}(x)}\varphi .$$
Assume that $x_{\eps}^1 \neq x$. Then for $\tilde{x}_{\eps}^1=2x-x_{\eps}^1$ (the mirror point of $x_{\eps}^1$ with respect to $x$), we have 
\begin{align*}
& (1-\gamma \eps^2)\bigg\{ \frac{\alpha}{2}\bigg(\sup_{B_\eps(x)}\varphi  +\inf_{B_\eps(x)}\varphi   \bigg)
+\beta\kint_{B_\eps(x)}\varphi  (y)dy \bigg\} - \varphi (x) \\
 & \ge  (1-\gamma \eps^2)\bigg\{ \frac{\alpha}{2}\big(\varphi(x_{\eps}^1)  +  \varphi(\tilde{x}_{\eps}^1)  \big)
+\beta\kint_{B_\eps(x)}\varphi  (y)dy \bigg\} - \varphi (x) 
\\ & = \frac{1}{2} \eps^{2} \bigg(  \frac{(p-2) \langle D^{2} \varphi (x) \nu_{\eps}, \nu_{\eps} \rangle+\Delta\varphi (x)  }{p+n} - \gamma  \varphi (x)\bigg)+o(\eps^2),
\end{align*}
where $\nu_{\eps}=(x-x_{\eps}^1)/|x-x_{\eps}^1|$. 
By \eqref{2est1}, we obtain 
\begin{align} \begin{split} \label{2est2}
 \gamma\eps^2  (u_{\eps}-\varphi)(x_{\eps})&+(1-\gamma \eps^2)\eta_{\eps} \\&\ge   
 \frac{1}{2} \eps^{2} \bigg(  \frac{(p-2) \langle D^{2} \varphi (x) \nu_{\eps}, \nu_{\eps} \rangle+\Delta\varphi (x)  }{p+n} - \gamma \varphi (x)\bigg)+o(\eps^2)  .
\end{split}
\end{align}

Suppose that $D\varphi (x) \neq 0 $. Since $x_{\eps} \to x$ when $\eps \to 0$, it follows that
$$\nu_{\eps}  \to -\frac{D\varphi (x)}{|D\varphi (x)|} $$
as $\eps \to 0$ (we remark that we can find a proper subsequence of $\{\nu_{\eps}\}$ in that case due to $D\varphi (x) \neq 0 $).
and this implies 
$$(p-2) \langle D^{2} \varphi (x) \nu_{\eps}, \nu_{\eps} \rangle+\Delta\varphi (x) \to \Delta_{p}^{N} \varphi (x)   .$$
Meanwhile, we also have
$$(u_{\eps}-\varphi)(x_{\eps}) \to (u-\varphi)(x) =0$$
by the uniform convergence of $u_{\eps}$. 
We take $\eta=o(\eps^2)$ and
divide both side in \eqref{2est2} by $\eps^{2}$.
By letting $\eps \to 0$, we finally get
$$ 0 \ge \Delta_{p}^{N} \varphi (x) -(p+n)\gamma \varphi (x)  .$$

On the other hand, if $D\varphi (x) = 0  $,
we observe that 
\begin{align*} 
& (1-\gamma \eps^2)\bigg\{ \frac{\alpha}{2}\bigg(\sup_{B_\eps(x)}\varphi  +\inf_{B_\eps(x)}\varphi   \bigg)
+\beta\kint_{B_\eps(x)}\varphi  (y)dy \bigg\} - \varphi (x) \\
 & \ge  (1-\gamma \eps^2)\bigg\{ \frac{\alpha}{2}\big(\varphi(x_{\eps}^1)  +  \varphi(\tilde{x}_{\eps}^1)  \big)
+\beta\kint_{B_\eps(x)}\varphi  (y)dy \bigg\} - \varphi (x) 
\\ & \ge \frac{1}{2} \eps^{2} \bigg(  \frac{(p-2) \lambda_{\min}(D^{2}\varphi(x))+\Delta\varphi (x)  }{p+n} - \gamma  \varphi (x)\bigg)+o(\eps^2).
\end{align*}
By using $x_{\eps} \to x$ as $\eps \to 0$ and the continuity of the map $z  \mapsto \lambda_{\min}(D^{2}\varphi(z)) $, we can verify that
\begin{align} \label{2est3} 0 \ge \frac{1}{p+n} \big\{ \Delta\varphi(x) + (p-2)\lambda_{\min}(D^{2}\varphi(x) ) \big\}
-\gamma \varphi (x)
\end{align}
by a similar computation in the previous case.

We can also show the inequality in the opposite direction by considering a function $\varphi $ touching $u$ from above
and using a similar argument.
Then we complete the proof.
\end{proof}

For the uniqueness, we can find the relevant result for \eqref{eqmiin} in \cite[Appendix D]{MR3698169}.
Combining the regularity estimates, the convergence for \eqref{dpp_damp} and the uniqueness result, we get the following regularity result of $u$ solving \eqref{eqmiin}.
\begin{theorem}\label{pdereg}Let $u$ be the solution of \eqref{eqmiin} for $ 2<p< \infty$.
Then we have
$$ |u (x) - u(z) | \le C ||F||_{L^{\infty}(\partial \Omega)} \frac{|x-z|}{r} ,$$ where $x, z \in B_{r}(y)$ for some $y\in \Omega$ with $B_r  (y)\subset\subset \Omega$ and $C>0$ depends only on $ \alpha, \gamma$ and $n$.
\end{theorem}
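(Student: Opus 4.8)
The plan is to read the interior estimate for $u$ off from the interior regularity of the discrete solutions $u_{\eps}$ and then to let $\eps\to0$. The point is that the inequality we want is precisely what the statement of Main Theorem 1 (the interior content of Theorem \ref{intes}) becomes once the term $\eps/r$ is dropped, so the whole proof is an approximation-plus-passage-to-the-limit argument. First I would fix $y\in\Omega$ and $r>0$ with $B_r(y)\subset\subset\Omega$, together with two points $x,z\in B_r(y)$. By Theorem \ref{conv}, under the standing hypotheses there is a subsequence $\{u_{\eps_i}\}$ of the solutions to \eqref{dpp_damp} converging uniformly on $\overline{\Omega}$ to a viscosity solution of \eqref{eqmiin}. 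Since $2<p<\infty$ yields $\alpha=\frac{p-2}{p+n}\in(0,1)$, and since the solution of \eqref{eqmiin} is unique by \cite[Appendix D]{MR3698169}, this uniform limit must coincide with the given $u$; in particular $u_{\eps_i}(x)\to u(x)$ and $u_{\eps_i}(z)\to u(z)$.

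Next I would apply the interior estimate of Theorem \ref{intes} to each $u_{\eps_i}$, rescaled to the ball $B_r(y)$. Writing $\tilde u_{\eps}(\xi)=u_{\eps}(y+r\xi)$ transforms the dynamic programming principle \eqref{dpp_damp} with parameters $(\eps,\gamma)$ into one with step size $\eps/r$ and discount parameter $\gamma r^2$, and since $r$ is bounded by $\operatorname{diam}(\Omega)$ the constant furnished by Theorem \ref{intes} remains controlled by $\alpha,\gamma,n$ alone. Combining this with the maximum principle of Remark \ref{maxcom}, namely $||u_{\eps_i}||_{L^{\infty}(\Omega)}\le ||F||_{L^{\infty}(\Gamma_{\eps_i})}$, gives
$$ |u_{\eps_i}(x)-u_{\eps_i}(z)| \le C\,||F||_{L^{\infty}(\Gamma_{\eps_i})}\,\frac{|x-z|+\eps_i}{r} $$
for all $x,z\in B_r(y)$, with $C=C(\alpha,\gamma,n)$. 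This is exactly Main Theorem 1.

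Finally I would let $i\to\infty$ in the displayed inequality. The term $\eps_i/r$ vanishes, the left-hand side converges to $|u(x)-u(z)|$ by pointwise convergence, and $||F||_{L^{\infty}(\Gamma_{\eps_i})}\to ||F||_{L^{\infty}(\partial\Omega)}$ because $F$ is continuous and the strips $\Gamma_{\eps_i}$ collapse onto $\partial\Omega$. This produces the asserted bound, with the stated dependence of the constant. I expect the main obstacle to be the bookkeeping in the rescaling step: one must verify that the change of variables genuinely sends \eqref{dpp_damp} to a problem of the same form under $\gamma\mapsto\gamma r^2$ and $\eps\mapsto\eps/r$, and that the geometric requirement of Theorem \ref{intes} (a concentric ball of twice the radius lying inside $\Omega$) can be met using the compact-containment hypothesis $B_r(y)\subset\subset\Omega$, so that the resulting constant depends only on $\alpha,\gamma,n$ and not on the position of $B_r(y)$ in $\Omega$. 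Once the scaling is pinned down, the limit passage itself is routine, as the $\eps$-dependent error term and the discrete nature of $u_{\eps}$ disappear in the limit.
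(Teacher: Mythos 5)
Your proposal follows essentially the same route as the paper, whose proof of Theorem \ref{pdereg} is exactly the one-line combination of the interior estimate (Theorem \ref{intes}, rescaled), the convergence result (Theorem \ref{conv}), and the uniqueness from \cite[Appendix D]{MR3698169}, followed by letting $\eps\to0$ to kill the $\eps/r$ term. Your additional attention to the rescaling bookkeeping ($\gamma\mapsto\gamma r^2$, $\eps\mapsto\eps/r$) is a detail the paper leaves implicit, but it does not change the argument.
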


%\bibliography{mybib}
%\bibliographystyle{alpha}

\end{document}